\DeclareMathOperator{\lcm}{lcm}
\begin{document}

\newtheorem{theorem}{\bf Theorem}[section]
\newtheorem{proposition}[theorem]{\bf Proposition}
\newtheorem{definition}[theorem]{\bf Definition}
\newtheorem{corollary}[theorem]{\bf Corollary}
\newtheorem{example}[theorem]{\bf Example}
\newtheorem{exam}[theorem]{\bf Example}
\newtheorem{remark}[theorem]{\bf Remark}
\newtheorem{lemma}[theorem]{\bf Lemma}
\newcommand{\nrm}[1]{|\!|\!| {#1} |\!|\!|}

\newcommand{\ba}{\begin{array}}
\newcommand{\ea}{\end{array}}
\newcommand{\von}{\vskip 1ex}
\newcommand{\vone}{\vskip 2ex}
\newcommand{\vtwo}{\vskip 4ex}
\newcommand{\dm}[1]{ {\displaystyle{#1} } }

\newcommand{\be}{\begin{equation}}
\newcommand{\ee}{\end{equation}}
\newcommand{\beano}{\begin{eqnarray*}}
\newcommand{\eeano}{\end{eqnarray*}}
\newcommand{\inp}[2]{\langle {#1} ,\,{#2} \rangle}
\def\bmatrix#1{\left[ \begin{matrix} #1 \end{matrix} \right]}

\newcommand{\tb}[1]{\textcolor{blue}{ #1}}
\newcommand{\tm}[1]{\textcolor{magenta}{ #1}}
\newcommand{\tre}[1]{\textcolor{red}{ #1}}
\newcommand{\tg}[1]{\textcolor{olive}{ #1}}



\def \R{{\mathbb R}}
\def \C{{\mathbb C}}
\def \K{{\mathbb K}}
\def \M{{\mathcal M}}
\def \P{{\mathcal P}}
\def \pf{{\bf Proof: }}
\def \pds{{\mathrm {PDS}}}
\def \C{{\mathcal C}}
\def \T{{\mathcal T}}
\def \S{{\mathcal{O}}}

\title{On the trace-zero doubly stochastic matrices of order $5$}

\author{ Amrita Mandal\thanks{Corresponding author, Harish-Chandra Research Institute, Prayagraj, India, Email: mandalamrita55@gmail.com} \footnotemark[3] \thanks{Birla Institute of Technology Mesra, Ranchi, India} \, and \, Bibhas Adhikari\thanks{Department of Mathematics, IIT Kharagpur, India} \thanks{The author currently works at Fujitsu Research of America, Inc., California, USA, Email: bibhas@maths.iitkgp.ac.in}}
\date{}
 \maketitle
\thispagestyle{empty}

\small \noindent{\bf Abstract.} We propose a graph theoretic approach to determine the trace of the product of two permutation matrices through a weighted digraph representation for a pair of permutation matrices. Consequently, we derive trace-zero doubly stochastic (DS) matrices of order $5$ whose $k$-th power is also a trace-zero DS matrix for $k\in\{2,3,4,5\}$. Then, we determine necessary conditions for the coefficients of a generic polynomial of degree $5$ to be realizable as the characteristic polynomial of a trace-zero DS matrix of order $5$. Finally, we approximate the eigenvalue region of trace-zero DS matrices of order $5.$  


\vone \noindent{\bf Keywords.} Permutation matrix, doubly stochastic matrix, inverse eigenvalue problem
	
 \vone\noindent{\bf AMS subject classification(2000):} 15A18, 15B51.
		
\section{Introduction}\label{sec:1}

Given a collection $\Lambda$ of $n$ scalars and a set of structured matrices $\mathbb{S}\subseteq\K^{n\times n}$, the \textit{inverse eigenvalue problem} (IEP), also known as \textit{eigenvalue realizibility problem}, is concerned with finding  $A\in\mathbb{S}$ such that $\Lambda=\sigma(A),$ where $\sigma(A)$ denotes the (multi)set of eigenvalues of $A,$ and $\K$ is the field of real or complex numbers \cite{Johnson2018}. There has been a paramount interest for solving IEP when $\mathbb{S}$ is the set of nonnegative matrices, stochastic matrices, doubly stochastic (DS) matrices or particular subsets of such matrices \cite{johnson1981row, Mourad, Lei, Johnson2018, Nader, mandal2019eigenvalue}. A vast literature is available for these problems when $n\leq 4,$ whereas only a little is known for matrices of order $5$ or more except for some sets of doubly structured matrices. 
For instance, IEP for symmetric nonnegative matrices of order $5$ has been investigated recently in \cite{marijuan2023note}. 
Besides, necessary and sufficient conditions for the eigenvalue realizability problem for trace-zero nonnegative matrices of order $5$ are discussed \cite{trace0,niepebl}. 

A more general problem in the context of IEP is to determine the eigenvalue region of a given $\mathbb{S}\subseteq \K^{n\times n},$ i.e. characterizing $\bigcup_{A\in \mathbb{S}}\sigma(A).$ Indeed, let  $\Omega_n$ and $\Omega_n^0$ denote the set of all DS matrices and trace-zero DS matrices of order $n$ respectively, and $\omega_n$ and $\omega_n^0$ denote the eigenvalue region of $\Omega_n$ and $\Omega_n^0$ respectively. Then it is shown that $\cup_{j=2}^n \Pi_j \subseteq \omega_n, n\geq 2,$ and  $\omega_n =\cup_{j=2}^n \Pi_j$ for $2\leq n\leq 4,$ where $\Pi_j$ denotes the convex hull of the $j^{th}$ roots of unity \cite{ Perfect1965,Mashreghi2007,Levick2015,harlev}. However, characterization of $\Omega_n$ is a long standing open problem for $n\geq 5.$ For trace-zero DS matrices, it is proved that $\Pi_n^0 \subseteq \omega_n^0$ and $\omega_3^0=\Pi_3^0,\,\omega_4^0=\Pi_4^0 \cup [-1,1],$ where $\Pi_j^0$ is the union of $1$ and convex hull of the $j^{th}$ roots of unity except the point $1$ \cite{Perfect1965,Benvenuti2018}. In that context, we mention that IEP is described for DS matrices of orders up to $3$ in \cite{Perfect1965}, and algebraic relations among the coefficients of the characteristic polynomial of a matrix in $\Omega_4^0$ are given in \cite{Benvenuti2018}.


In this paper, we consider $\Omega^0_5$ and develop a graph theoretic representation of pairs of permutation matrices of order $5$ that enables us to approximate $\omega_5^0.$ This is achieved by considering the trace-zero DS matrices as convex combinations of trace-zero permutation matrices and by finding the trace of the product of two trace-zero permutation matrices of order $5$. In this process, we determine the matrices $A\in \Omega_5^0$ such that $A^k\in \Omega_5^0,$ $k=2,3,4,5$ and we show that any such matrix can be expressed as convex combinations of at most three permutation matrices. Consequently, we determine algebraic relations between coefficients of a generic polynomial of degree $5$ so that the polynomial represents the characteristic polynomial of some $A\in\Omega^0_5.$ Finally, we approximate $\omega_5^0.$

The rest of the paper is organized as follows. In Section \ref{sec:preli}, we develop a method to estimate the trace of the product of two permutation matrices with certain cycle types by defining a weighted digraph associated with the pair of permutation matrices. Section \ref{sec:3} describes trace-zero DS matrices of order $5$ whose $k$-th power are also trace-zero matrices, when $k \in \{2,3,4,5\}$. In Section \ref{sec:4}, we determine necessary conditions of the coefficients of a generic polynomial of degree $5$ to be realized as the characteristic polynomial of a trace-zero DS matrix of order $5$.

{\bf Notation:} The symmetric group on $n$ elements is denoted as $S_n.$ 
If $\pi \in S_n$ is a product of $r$ disjoint cycles of lengths $k_1, k_2,\ldots, k_r$, then we say $\pi$ is a permutation of cycle type $(k_1, k_2, \ldots, k_r).$ The group of permutation matrices of order $n$ is denoted as $\P_n.$ The set of permutations with no $1$-cycle in its cycle type is denoted as $S_n^0,$ and the corresponding set of permutation matrices with trace zero is denoted as $\P_n^0.$ $I_n$ denotes the identity matrix of order $n,$ and  $J$ denotes the all-ones matrix whose dimension will be clear from the context. The trace of the matrix $A$ is denoted as $tr(A).$


\section{The trace of the product of two permutation matrices} \label{sec:preli}

In this section, we develop a method to determine the trace of the product of two permutation matrices based on its cycle types corresponding to a weighted digraph representation of a permutation matrix. For any permutation $\pi \in S_n$, the symmetric group of order $n$, the corresponding permutation matrix $P_{\pi}=(P_{ij})$ is defined as $P_{ij}=1$ if $j=\pi(i),$ and $0$ otherwise. First, we review the notion of isomorphic weighted digraphs.     


 A weighted digraph is an ordered triple $G=(V, E, W)$ where $V$ denotes the vertex set, $E\subseteq V \times V$ is the edge set, and $W: E\rightarrow \mathbb{R^+}$ assigns a weight to each edge of the digraph. An edge $(v,v)\in E$ is called a loop at the vertex $v.$ If $(u,v)\in E$ then we denote $\overrightarrow{uv}$ to mean that the edge is directed from $u$ to $v.$ Two weighted digraphs  $G_1\equiv G_1(V_1,E_1,W_1)$ and $G_2 \equiv G_2(V_2,E_2,W_2)$ are said to be isomorphic if there exists a bijective function  $f:V_1 \rightarrow V_2$ such that
\begin{enumerate}
    \item $\overrightarrow{uv}\in E_1$ if and only if $\overrightarrow{f(u)f(v)}\in E_2.$
    \item $W_1(\overrightarrow{uv})=W_2(\overrightarrow{f(u)f(v)})$ for every $\overrightarrow{uv}\in E_1.$
\end{enumerate}
Then clearly $f\in S_n$ if $V_1 = V_2$
and $|V_i|=n, i=1,2.$


A weighted digraph $G_\pi=(V_\pi, E_\pi, W_\pi)$ corresponding to a permutation $\pi\in S_n$ is defined as follows. Set $V_\pi=\{1, \hdots,n\},$ and $\overrightarrow{ij}\in E_\pi$ if $j=\pi(i),$ $i,j\in V_\pi.$ Then observe that $G_\pi$ is a union of disjoint directed cycles and loops. Each cycle-subgraph in $G_\pi$ corresponds to a cycle in the permutation $\pi.$ The weight function $W_\pi:E\rightarrow \R^+$ is a constant function, symbolically denoted as $W_\pi(e)=w_\pi$, $e\in E$ for brevity.

Then we define a weighted digraph corresponding to a pair of permutations $\pi,\tau$ of different weights, i.e. with $w_\pi \neq w_\tau$, as the union of $G_\pi=(V_\pi, E_\pi, W_\pi)$ and $G_\tau=(V_\tau, E_\tau, W_\tau)$, denoted by $G_{\{\pi,\tau\}}=(V_{\{\pi,\tau\}}, E_{\{\pi,\tau\}}, W_{\{\pi,\tau\}})$ where\begin{enumerate}
    \item $V_{\{\pi,\tau\}}=V_\pi=V_\tau=\{1,\hdots,n\}$
    \item $E_{\{\pi,\tau\}}=E_\pi\cup E_{\tau}$
    \item $W_{\{\pi,\tau\}}: E_{\{\pi,\tau\}}\rightarrow \mathbb{R}^+$ such that $W_{\{\pi,\tau\}}(e)=w_\pi$ if $e\in E_\pi\setminus E_{\tau},$ $W_{\{\pi,\tau\}}(e)=w_\tau$ if $e\in E_{\tau}\setminus E_\pi,$ and $W_{\{\pi,\tau\}}(e)=w_\pi+w_{\tau}$ if $e\in E_\pi\cap E_{\tau}.$
\end{enumerate} 

Two such digraphs $G_{\{\pi_1,\tau_1\}}$ and $G_{\{\pi_2,\tau_2\}}$ are called isomorphic if they are isomorphic as weighted digraphs. The isomorphic relation on the set $\{G_{\{\sigma,\tau\}} : \sigma, \tau\in S_n\}$ is an equivalence relation, and we call an equivalence class defined by this relation as \textit{equivalence graph class}.

Also recall that the conjugate of an element $a$ of a group $\mathbb{G}$ by an element $b\in \mathbb{G}$ is an element $bab^{-1}\in\mathbb{G}.$ The relation $\sim$ on $\mathbb{G},$ defined by $a \sim b,$ if $a$ is a conjugate of $b$ is an equivalence relation. The equivalence
classes under this relation are called the conjugacy classes of $\mathbb{G}.$ For the symmetric group $S_n,$ the conjugate $\tau \pi\tau^{-1}=(\tau(a_1) \, \tau(a_2) \, \hdots \, \tau(a_k))$ for a $k$-cycle $(a_1\, a_2\, \hdots \, a_k)=\pi\in S_n.$ The conjugacy classes in the permutation group $S_n$ are determined by the cycle type of the permutations. Indeed, two elements of $S_n$ are in the same conjugacy class if and only if they have the same cycle type. Then, we have the following proposition.

\begin{proposition}\label{prop}
 Let $ G_1\equiv G_{\{\sigma_1,\tau_1\}}$ and $G_2\equiv G_{\{\sigma_2,\tau_2\}}$,  $\sigma_i,\tau_i \in S_n,i=1,2.$ Suppose $w_{\sigma_1}=w_{\sigma_2}=w_\sigma$ and $w_{\tau_1}=w_{\tau_2}=w_\tau.$ Let $f: \{1,\hdots,n\}\rightarrow \{1,\hdots,n\}$ be a bijection. Then $f$ defines an isomorphism from $G_1$ to $G_2$ if and only if  $f\sigma_1 f^{-1}=\sigma_2$ and $f\tau_1 f^{-1}=\tau_2.$ 
 \end{proposition}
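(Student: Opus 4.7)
The plan is to prove the two implications separately, translating between the graph-theoretic isomorphism axioms (edge preservation and weight preservation) and the algebraic statement of simultaneous conjugation.

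For the forward direction, assume $f$ is an isomorphism from $G_1$ to $G_2$. Fix any $i\in\{1,\ldots,n\}$ and look at the edge $\overrightarrow{i\,\sigma_1(i)}\in E_{\sigma_1}\subseteq E_{\{\sigma_1,\tau_1\}}$. By the definition of $W_{\{\sigma_1,\tau_1\}}$, its weight is $w_\sigma$ if $\sigma_1(i)\neq\tau_1(i)$ and $w_\sigma+w_\tau$ if $\sigma_1(i)=\tau_1(i)$. Since $f$ preserves edges and weights, the image $\overrightarrow{f(i)\,f(\sigma_1(i))}$ is an edge of $G_2$ of exactly the same weight. In the first case this forces the image edge into $E_{\sigma_2}\setminus E_{\tau_2}$, and in the second case into $E_{\sigma_2}\cap E_{\tau_2}$; either way $f(\sigma_1(i))=\sigma_2(f(i))$. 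As $i$ is arbitrary, $f\sigma_1=\sigma_2 f$, i.e., $f\sigma_1 f^{-1}=\sigma_2$. Interchanging the roles of $\sigma$ and $\tau$ gives $f\tau_1 f^{-1}=\tau_2$.

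For the converse, assume $f\sigma_1 f^{-1}=\sigma_2$ and $f\tau_1 f^{-1}=\tau_2$. To verify condition (1) in the definition of isomorphism, note that
$\overrightarrow{ij}\in E_{\sigma_1}\iff j=\sigma_1(i)\iff f(j)=\sigma_2(f(i))\iff \overrightarrow{f(i)f(j)}\in E_{\sigma_2},$
and analogously for $\tau$; unioning the two biconditionals shows that $f$ induces a bijection between $E_{\{\sigma_1,\tau_1\}}$ and $E_{\{\sigma_2,\tau_2\}}$. To verify condition (2) it suffices to check that the three weight classes are preserved, which reduces to the observation that $\overrightarrow{ij}\in E_{\sigma_1}\cap E_{\tau_1}$ iff $\sigma_1(i)=\tau_1(i)=j$, iff (after applying $f$) $\sigma_2(f(i))=\tau_2(f(i))=f(j)$, iff $\overrightarrow{f(i)f(j)}\in E_{\sigma_2}\cap E_{\tau_2}$.

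The argument is mostly bookkeeping; the only subtle point, and the one I would flag explicitly, is that the forward direction implicitly requires the three values $w_\sigma$, $w_\tau$, and $w_\sigma+w_\tau$ to be pairwise distinct, so that the weight of an edge in the combined graph uniquely identifies whether it comes from $E_{\sigma_i}$ alone, $E_{\tau_i}$ alone, or both. Under that (implicit) genericity assumption, the proposition is essentially a restatement of the isomorphism axioms in terms of conjugation, and there is no deeper combinatorial obstacle.
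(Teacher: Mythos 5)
Your proof is correct and follows essentially the same route as the paper's: the forward direction extracts $f\sigma_1 f^{-1}=\sigma_2$ from edge- and weight-preservation (your version, pushing the edge $\overrightarrow{i\,\sigma_1(i)}$ forward, is a bit cleaner than the paper's pullback via $f^{-1}$), and the converse is the same direct verification. The caveat you flag --- that one needs $w_\sigma$, $w_\tau$, $w_\sigma+w_\tau$ pairwise distinct for the weight to identify which of $E_{\sigma_2}$, $E_{\tau_2}$ an image edge lies in --- is a genuine and worthwhile observation: the paper's own step ``this is a contradiction since $f$ preserves the weights'' silently relies on exactly this genericity, and without it the forward implication fails (e.g.\ swapping $\sigma$ and $\tau$ roles when $w_\sigma=w_\tau$).
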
 
 
\begin{proof}
Let $f$ define an isomorphism from $G_1$ to $G_2.$  Let $i\in\{1,\hdots,n\}.$ Then suppose $\sigma_2(i)=j\in V,$ that is $\overrightarrow{ij}\in E_{\sigma_2}.$ Now, if $f^{-1}(i)=k\in V$ then there exists $l\in V$ such that either $\overrightarrow{kl}\in E_{\sigma_1}$ or $\overrightarrow{kl}\in E_{\tau_1}$ since $G_{\{\sigma_1,\tau_1\}}$ and $G_{\{\sigma_2,\tau_2\}}$ are isomorphic under the isomorphism $f,$ and $f(l)=j$ with $W_{\{\sigma_2,\tau_2\}}(\overrightarrow{ij})=W_{\{\sigma_1,\tau_1\}}(\overrightarrow{kl}).$ There can exist only one such vertex $l$ due to the fact that the vertices in digraphs corresponding to $\sigma_x,\tau_x, x=1,2,$ have in-degree or out-degree $1.$  If $\overrightarrow{kl}\in E_{\sigma_1}$ then we are done since $f \sigma_1 f^{-1}(i)=\sigma_2(i).$ Otherwise,  $\overrightarrow{kl}\in E_{\tau_1}$ and $\sigma_1(k)=m\neq l$. However, this is a contradiction since $f$ preserves the weights. Similarly we can show $f \tau_1 f^{-1}(i)=\tau_2(i).$
 
To prove the converse, let $f\sigma_1 f^{-1}=\sigma_2$ and $f\tau_1 f^{-1}=\tau_2.$ Now let $\overrightarrow{ij}$ be an edge in $G_1.$ Then either $j=\sigma_1(i)$ or $j=\tau_1(i)$ or both hold together so that one or both of $f(j)=f\sigma_1(i)=\sigma_2 f(i)$ and $f(j)=f\tau_1(i)=\tau_2 f(i)$ hold. Thus $\overrightarrow{f(i)f(j)}\in E_{\{\sigma_2,\tau_2\}}.$ Similarly if $\overrightarrow{kl}$ is an edge in $G_2,$ then either $l=\sigma_2(k)$ or $l=\tau_2(k)$ or both hold. Also, there exist $i,j\in V_{\{\sigma_1,\tau_1\}}$ with $i\neq j$ such that $f(i)=k$ and $f(j)=l,$ since $f$ is a bijection from $G_1$ to $G_2.$ Hence, one or both of $f\sigma_1(i)=\sigma_2 f(i)=\sigma_2(k)=l=f(j)$ and $f\tau_1(i)=\tau_2 f(i)=\tau_2(k)=l=f(j)$ are satisfied. This further implies that either $\sigma_1(i)=j$ or $\tau_1(i)=j$ or $\sigma_1(i)=\tau_1(i)=j,$ i.e. $\overrightarrow{ij}\in E_{\{\sigma_1,\tau_1\}}.$ Clearly under the assumptions that $w_{\sigma_1}=w_{\sigma_2}=w_\sigma$ and $w_{\tau_1}=w_{\tau_2}=w_\tau,$ $W_{\{\sigma_1,\tau_1\}}(\overrightarrow{ij})=W_{\{\sigma_2,\tau_2\}}(\overrightarrow{f(i)f(j)}),$ for $\overrightarrow{ij}\in E_{\{\sigma_1,\tau_1\}}.$ Hence $f$ is an isomorphism from $G_1$ to $G_2.$ 
\end{proof}

Let $P_{\alpha}=P_{\pi} P_{\tau},$ $P_\pi,P_\tau\in \P_n.$   Then the $ij$th entry of $P_\alpha=P_{\alpha}(i,j)=1$ implies that there exists exactly one $k,$ $1\leq k\leq n$ such that $P_{\pi}(i,k)=P_{\tau}(k,j)=1.$ Hence $j=\tau(k)$ and $k=\pi(i),$ i.e. $j=\tau\pi(i),$ thus $\alpha=\tau\pi.$
If $ G_1\equiv G_{\{\pi_1,\tau_1\}}$ and $G_2\equiv G_{\{\pi_2,\tau_2\}}$ are isomorphic digraphs under the isomorphism $f:G_1\rightarrow G_2$ with $w_{\pi_1}=w_{\pi_2},$ $w_{\tau_1}=w_{\tau_2},$ then from Proposition \ref{prop} we obtain
\begin{itemize}
   \item[(a)]  $P_f^TP_{\pi_1}P_f=P_{\pi_2}, \,\, P_f^TP_{\tau_1}P_f=P_{\tau_2}$ \item[(b)]  $P_f^TP_{\tau_1}P_{\pi_1}P_f=P_{\tau_2}P_{\pi_2}, \,\, P_f^TP_{\pi_1} P_{\tau_1}P_f=P_{\pi_2}P_{\tau_2}.$
\end{itemize} Clearly, the permutation matrices $P_{\tau_1}P_{\pi_1}$ and $P_{\tau_2}P_{\pi_2}$ are conjugate to each other, so that they correspond to permutations of the same cycle type. Similarly, the permutation matrices $P_{\pi_1}P_{\tau_1}$ and $P_{\pi_2}P_{\tau_2}$ correspond to permutations of the same cycle type.






In what follows, we derive the number of equivalence graph classes in $\{G_{\{\pi,\tau\}}\tm{: \pi}, \tau\in S_n\}.$ The equivalence graph class corresponding to a pair $(\pi_1,\tau_1)$ is defined as
$$[G_{\{\pi_1,\tau_1\}}] := \left\{G_{\{\pi,\tau\}} : \mbox{there exists} \, f\in S_n \, \mbox{for which} \, f\pi f^{-1}=\pi_1, f \tau f^{-1}=\tau_1\right\}.$$

We mention that, in \cite{harlev} the authors have considered the problem of identifying \textit{non-isomorphic} permutation pairs in $S_n\times S_n,$ by defining the action of $S_n$ on the group $S_n \times S_n$ such that 
 $p\in S_n$ acts on $(\pi,\beta)\in S_n\times S_n$ by $(\pi,\beta)^p=(p\pi p^{-1},p\beta p^{-1}).$ Then the set of pairs $(\pi_1,\beta_1)$ and $(\pi_2,\beta_2)$ belong to the same equivalence class if they are related by $(\pi_1,\beta_1)^p=(\pi_2,\beta_2).$ Thus, the representative pairs have been derived from the orbits of the action with the help of a computational simulation for $n\leq 13$. Here, we have taken a different approach by means of the idea of isomorphism of weighted directed graphs for the determination of non-isomorphic permutation pairs in $S_n\times S_n.$ We provide an analytical approach to count the number of distinct equivalence classes for pairs of permutations that have specific cycle types and pave the way for several possible cases.   
 
First, we prove the following lemmas. 



\begin{lemma} \label{lemma:full cycle}
 Let $\pi\in S_n$ be an $n$-cycle. Let $f\in S_n,$ then $f\pi f^{-1}=\pi$ if and only if $f\in \langle \pi\rangle,$ where $\langle \pi\rangle=\{\pi^k: k \in \mathbb{Z}\}$ is the cyclic subgroup of $S_n$ generated by $\pi.$ 
\end{lemma}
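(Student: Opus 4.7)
The plan is to prove the two implications separately, with the bulk of the argument going into the converse. For the ``if'' direction, note that if $f\in\langle\pi\rangle$ then $f=\pi^k$ for some integer $k$, and powers of $\pi$ commute with $\pi$, so $f\pi f^{-1}=\pi^k\pi\pi^{-k}=\pi$.

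For the ``only if'' direction, I would apply the cycle-conjugation formula recalled in the paragraph immediately before the statement: writing $\pi=(a_1\,a_2\,\cdots\,a_n)$, we have
$$f\pi f^{-1}=(f(a_1)\,f(a_2)\,\cdots\,f(a_n))$$
for every $f\in S_n$. The hypothesis $f\pi f^{-1}=\pi$ therefore says that the cycle notations $(f(a_1)\,\cdots\,f(a_n))$ and $(a_1\,\cdots\,a_n)$ represent the same permutation. Next I would use the elementary fact that two length-$n$ cycle notations denote the same permutation if and only if one is a cyclic rotation of the other; this is checked by locating the image of the symbol $a_1$. Consequently there exists $k\in\{0,1,\dots,n-1\}$ such that $f(a_i)=a_{i+k}$ for all $i$, with subscripts taken modulo $n$.

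Finally, I would compare $f$ with $\pi^k$: by definition of $\pi$, the power $\pi^k$ sends $a_i$ to $a_{i+k}$, so $f$ and $\pi^k$ agree on every $a_i$. Because $\pi$ is a single $n$-cycle on $\{1,\dots,n\}$, the symbols $a_1,\dots,a_n$ exhaust the ground set, so $f=\pi^k$ on all of $\{1,\dots,n\}$, which gives $f\in\langle\pi\rangle$.

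I do not anticipate any real obstacle; this is essentially the textbook computation that the centralizer of an $n$-cycle in $S_n$ is the cyclic subgroup it generates. The only step that merits a careful sentence is the claim that two $n$-cycle notations represent the same permutation exactly when they differ by a cyclic shift, and this is immediate once one follows where $a_1$ is sent.
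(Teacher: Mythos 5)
Your proof is correct and follows essentially the same route as the paper's: both arguments reduce to showing that $f$ must shift the indices of the cycle $(a_1\,a_2\,\cdots\,a_n)$ by a fixed amount, so that $f=\pi^{m-1}$ for some $m$. The only cosmetic difference is that you invoke the conjugation formula $f\pi f^{-1}=(f(a_1)\,\cdots\,f(a_n))$ together with the cyclic-rotation characterization of equal $n$-cycle notations, whereas the paper unwinds the relation $f\pi=\pi f$ pointwise to reach the same conclusion.
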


\begin{proof} Let $\pi=(a_1a_2\ldots a_n),a_i\in\{1,\ldots,n\}.$
Then $f\pi f^{-1}=\pi$ implies $f\pi(a_i)=\pi f(a_i)$ for all $a_i,i=1,\ldots,n.$ Thus $f(a_{i+1})=\pi f(a_i)= 
\pi^2 f(a_{i-1})=\ldots=\pi^i f(a_1)$ for $i=1,\ldots,n(\mbox{mod}\, n).$ Now, $\pi(a_i)=a_{i+1}$ implies $\pi^{k}(a_i)=a_{i+k}.$ So that if $f(a_1)=a_m$ for some $m\in\{1,2,\ldots,n\},$ then $f(a_{i+1})=\pi^i f(a_1)=\pi^i a_m=a_{m+i}=\pi^{m-1}(a_{i+1})$ for all $a_{i+1}, i=1,\ldots,n.$ Hence $f$ is of the form $\pi^{m-1}$ for some $m\in\{1,2,\ldots,n\},$ which proves $f\pi f^{-1}=\pi$ yields $f\in \langle \pi\rangle.$ The converse is obvious.\end{proof}

Note that, $\{f\in S_n| f\pi f^{-1}=\pi\}$ is the stabilizer of $\pi\in S_n$ under the conjugate action of $S_n$ on itself. 

\begin{lemma} \label{lemma:n=k_1k_2}
 Let $\pi=\pi_1\pi_2\in S_n$ be of cycle type $(k_1,k_2),$ where $\pi_1$ and $\pi_2$ are disjoint cycles of length $k_1$ and $k_2$ respectively. Let $f\in S_n.$ Then $f\pi f^{-1}=\pi$ if and only if $f\in \langle \pi_1,\pi_2\rangle,$ where $\langle \pi_1,\pi_2\rangle=\{\pi_1^{p}\pi_2^{q}: p,q \in \mathbb{Z}\}$ is the subgroup of $S_n$ generated by $\pi_1$ and $\pi_2.$ 
\end{lemma}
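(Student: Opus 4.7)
The plan is to reduce the two-cycle case to two applications of Lemma \ref{lemma:full cycle}. Write $A_i := \mathrm{supp}(\pi_i)$ for $i=1,2$, so $A_1$ and $A_2$ are disjoint with $|A_i|=k_i$ and (by the hypothesis on the cycle type) $A_1 \cup A_2 = \{1,\dots,n\}$. My first step is to show that any $f$ with $f\pi f^{-1}=\pi$ must send $A_i$ to $A_i$. Since $f\pi f^{-1} = (f\pi_1 f^{-1})(f\pi_2 f^{-1})$ is again a product of two disjoint cycles, of lengths $k_1$ and $k_2$ and with supports $f(A_1)$ and $f(A_2)$, uniqueness of disjoint cycle decomposition forces $f\pi_1 f^{-1}=\pi_1$ and $f\pi_2 f^{-1}=\pi_2$ (and in particular $f(A_i)=A_i$), provided $k_1 \neq k_2$ so that the two cycles are distinguished by length.

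Once the supports are preserved, the restriction $f|_{A_i}$ is a bijection of $A_i$ that commutes with the full cycle $\pi_i$ on $A_i$. Applying Lemma \ref{lemma:full cycle} inside the symmetric group on $A_i$ yields $f|_{A_i} = \pi_i^{p_i}$ for some integer $p_i$. Since $A_1$ and $A_2$ are disjoint and $\pi_1,\pi_2$ commute, these two restrictions glue together to give $f = \pi_1^{p_1}\pi_2^{p_2} \in \langle \pi_1, \pi_2\rangle$, proving the forward implication. The converse is immediate: disjoint cycles commute, so every $\pi_1^p \pi_2^q$ commutes with $\pi=\pi_1\pi_2$.

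The main subtlety, which I would flag explicitly, is the case $k_1=k_2$. In that case the uniqueness step in the first paragraph breaks down, and the centralizer genuinely contains extra elements that swap the two supports — for instance $(1\,3)(2\,4)$ commutes with $(1\,2)(3\,4)$ but is not a power-product of $(1\,2)$ and $(3\,4)$. So the statement as written implicitly needs $k_1\neq k_2$; otherwise the right-hand side must be enlarged by such swap elements. Apart from this hypothesis, the rest of the argument is a clean imitation of the full-cycle lemma.
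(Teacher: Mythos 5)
Your proof is correct and takes a cleaner, more structural route than the paper's. The paper argues by direct element-chasing: it tracks $f(a_1)$ and $f(a_{k_1+1})$ and splits into three cases according to which support each lands in, deriving contradictions in Cases 2 and 3. You instead invoke uniqueness of the disjoint cycle decomposition of $f\pi f^{-1}=(f\pi_1 f^{-1})(f\pi_2 f^{-1})$ to conclude $f\pi_i f^{-1}=\pi_i$ at once, and then apply Lemma \ref{lemma:full cycle} on each support separately. This buys brevity and makes the logical dependence on the full-cycle lemma explicit, whereas the paper re-derives the full-cycle computation inline.

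More importantly, your flag about $k_1=k_2$ is a genuine catch, not a pedantic one. When $k_1=k_2$ the centralizer of $\pi$ really does contain support-swapping elements (your example $(1\,3)(2\,4)$ commuting with $(1\,2)(3\,4)$ is correct: the centralizer there has order $8$ while $\langle\pi_1,\pi_2\rangle$ has order $4$), so the lemma as stated is false without the hypothesis $k_1\neq k_2$. The paper's Case 3 purports to exclude exactly such an $f$, but its computation is erroneous: it applies powers of $\pi_1$ to the point $f(a_1)=a_{k_1+p}$, which lies in the support of $\pi_2$ and is therefore fixed by $\pi_1$, so the claimed chain of equalities leading to $f(a_1)=f(a_{k_1+1})$ does not hold. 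The downstream uses of the lemma in the paper (cycle type $(3)+(2)$ in $S_5$, and Proposition \ref{pro:two compo prime} with $k_1,k_2$ distinct primes) all have $k_1\neq k_2$, so the later results are unaffected, but the lemma itself should carry the hypothesis you identified.
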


\begin{proof}
Let $\pi_1=(a_1\ldots a_{k_1})$ and $\pi_2=(a_{k_1+1}\ldots a_{k_1+k_2}),a_i\in\{1,\ldots,n\}.$ Then for $1\leq i \leq k_1,$
$\pi^{k}(a_i)=\pi_1^{k}(a_i)=a_{i+k}=a_j$ where $i+k\equiv j(\mbox{mod}\,k_1)$ and for $1\leq i \leq k_2,$ $\pi^{k}(a_{k_1+i})=\pi_2^{k}(a_{k_1+i})=a_{k_1+i+k}=a_{k_1+j}$ where $i+k\equiv j(\mbox{mod}\,k_2).$ Now $f\pi f^{-1}=\pi$ implies $f\pi(a_i)=\pi f(a_i)$ for $i=1,\ldots,n.$ Thus, $f(a_{i+1})=\pi f(a_i)=\ldots=\pi^i f(a_1)$ whenever $1\leq i \leq k_1$ and $f(a_{k_1+i+1})=\pi f(a_{k_1+i})=\ldots=\pi^i f(a_{k_1+1})$ whenever $1\leq i \leq k_2.$ Now we have the following cases under consideration.

Case $1$: Let $f(a_1)=a_p$ and $f(a_{k_1+1})=a_{k_1+q}$ for some $p\in\{1,\ldots,k_1\},q \in \{1,\ldots,k_2\}.$ Then $f(a_{i})=\pi^{i-1} f(a_1)=\pi^{i-1} (a_p)=\pi_1^{i-1}(a_p)=a_{p+i-1}=\pi_1^{p-1}(a_{i})=\pi_1^{p-1}\pi_2^{q-1}(a_{i})$ for all $a_{i}, i=1,\ldots,k_1.$ Similarly, we obtain $f(a_{k_1+i})=\pi^{i-1} f(a_{k_1+1})=\pi_2^{i-1}(a_{k_1+q})=a_{k_1+q+i-1}=\pi_2^{q-1}(a_{k_1+i})=\pi_1^{p-1}\pi_2^{q-1}(a_{k_1+i})$ for all $a_{k_1+i}, i=1,\ldots,k_2.$ Hence $f$ is of the form $\pi_1^{p-1}\pi_2^{q-1}$ for some $p\in\{1,\ldots,k_1\}$ and $q\in\{1,\ldots,k_2\}.$ Further $\pi_1^{k_1}=id,\pi_2^{k_2}=id,$ and hence we say $f\in \langle \pi_1,\pi_2\rangle.$ 

Case $2$: Let $f(a_1)=a_p$ and $f(a_{k_1+1})=a_{q},$ for some $p,q\in\{1,\ldots,k_1\}.$ Clearly $p\neq q.$ If $q>p,$ then $a_q=\pi_1^{q-p}(a_p)=f(a_{k_1+1})$ implies $\pi_1^{q-p} f(a_1)=f(a_{k_1+1}),$ i.e. $f(a_{q-p+1})=f(a_{k_1+1}).$ But $q-p<k_1$ and hence $f(a_{q-p+1})=f(a_{k_1+1})$ is a contradiction, since $f\in S_n$ is a bijective function. Otherwise if $q<p$ then we write $a_p=\pi_1^{p-q}(a_q)=f(a_1),$
which implies $\pi_1^{p-q} f(a_{k_1+1})=f(a_{1}),$ i.e. $f(a_{k_1+1})=f(a_{1}).$ Which is a contradiction.

Similarly we arrive at a contradiction if we choose $f(a_1)=a_{k_1+p}$ and $f(a_{k_1+1})=a_{k_1+q},$ for some $p,q\in\{1,\ldots,k_2\}.$ 

Case $3$: Let $f(a_1)=a_{k_1+p}$ and $f(a_{k_1+1})=a_{q},$ for some $p\in\{1,\ldots,k_2\}$ and $q\in \{1,\ldots,k_1\}.$ Then similarly as case $2,$ we obtain $a_q=\pi_1^{q-p-k_1} f(a_{1})=f(a_{k_1+1}),$ i.e. $f(a_{1})=\pi_1^{k_1-q+p}f(a_{k_1+1})=f(a_{k_1+1}),$ which contradicts.

Thus from the above cases we get $f\pi f^{-1}=\pi$ implies $f\in \langle \pi_1,\pi_2\rangle.$ The converse is obvious.
\end{proof}



\begin{lemma} \label{lemma:n prime}
 Let $n$ be a prime number. Then every non identity element in  $\langle \pi\rangle$ is an $n$-cycle whenever $\pi\in S_n$ is an $n$-cycle. 
\end{lemma}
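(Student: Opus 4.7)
The plan is to combine two elementary facts about orders and cycle types in $S_n$. First, since $\pi$ is an $n$-cycle, its order in $S_n$ is $n$, and hence $\langle \pi \rangle = \{\mathrm{id}, \pi, \pi^2, \ldots, \pi^{n-1}\}$ consists of exactly $n$ distinct elements. Thus every non-identity element of $\langle \pi \rangle$ has the form $\pi^k$ for some $k$ with $1 \leq k \leq n-1$, and I would show that each such $\pi^k$ is an $n$-cycle.

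The second ingredient is the standard identity that the order of $\pi^k$ equals $n/\gcd(k,n)$. Since $n$ is prime and $1 \leq k \leq n-1$, we have $\gcd(k,n) = 1$, so $\pi^k$ has order $n$. Now I would convert "order $n$" into "cycle-type $(n)$" as follows: write $\pi^k$ in disjoint cycle form with cycle lengths $k_1, k_2, \ldots, k_r$. Then $k_1 + \cdots + k_r = n$ and the order of $\pi^k$ equals $\mathrm{lcm}(k_1, \ldots, k_r)$. Setting this LCM equal to the prime $n$ forces some $k_i$ to be divisible by $n$; but each $k_i \leq n$, so $k_i = n$, which in combination with the summation constraint yields $r = 1$ and $k_1 = n$. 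Hence $\pi^k$ has cycle-type $(n)$.

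There is no real obstacle in this argument; it is a clean application of Lagrange-type reasoning on cyclic subgroups combined with the well-known order formula for cycle powers. The only place where primality of $n$ is used is to guarantee both (i) $\gcd(k,n)=1$ for every $1 \leq k \leq n-1$, and (ii) that the only way a partition of $n$ can have LCM equal to $n$ is the trivial partition $n = n$. If one wanted to avoid the order formula altogether, one could instead argue directly: consider the orbits of $\pi^k$ on $\{1,\ldots,n\}$; each orbit is contained in the single orbit of $\pi$, and primality of $n$ together with $\gcd(k,n)=1$ ensures there is only one such orbit, again giving cycle-type $(n)$.
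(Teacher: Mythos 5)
Your argument is correct and follows essentially the same route as the paper: both proofs use $\gcd(k,n)=1$ to conclude that $\pi^k$ has order $n$, and then identify the order with the lcm of the cycle lengths to force the cycle-type $(n)$. If anything, your handling of the lcm step (primality forces $n \mid k_i$ for some $i$, hence $k_i = n$ and $r=1$) is cleaner than the paper's contradiction argument.
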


\begin{proof} Since $n$ is prime $\gcd(n,k)=1$ for $1\leq k\leq n-1.$ Hence for $1\leq k\leq n-1$ the order of $\pi^k\in\langle \pi\rangle$ is $n,$  which is same as the order of $\pi.$ If possible, for a $k\in \{1,\ldots,n-1\},$ let $\pi^k$ be of cycle type $(m_1,m_2,\ldots,m_s),$ where $1\leq m_i<n$ for $i=1,\ldots,s.$ Hence $\lcm(m_1,\ldots,m_s)=n,$ which is possible only when one of $m_i>1$ and the others are $1.$ It follows that $\pi^k$ has order $m_i<n,$ a contradiction. Hence $\pi^k$ can be only a $n$-cycle. Thus, the proof follows.
\end{proof}

\begin{remark} \label{remark:k_1k_2prime}
\begin{enumerate}
    \item If $k_1$ and $k_2$ are coprime, i.e. $\gcd(k_1,k_2)=1$ in Lemma \ref{lemma:n=k_1k_2}, then $f\pi f^{-1}=\pi$ if and only if $f \in \langle \pi\rangle.$  
\item Let $k_1$ and $k_2$ be prime numbers. Then as a consequence of Lemma \ref{lemma:n prime}, it follows that the non identity element in  $\langle \pi_1,\pi_2\rangle$ is either of the three cycle types $(k_1,k_2),(k_1,1,\ldots,1)$ and $(1,\ldots,1,k_2),$ when $\pi=\pi_1\pi_2\in S_n$ is of cycle type $(k_1, k_2).$ Clearly, $k_1-1,k_2-1$ and $k_1k_2-k_1-k_2+1=(k_1-1)(k_2-1)$ are the numbers of permutations in $\langle \pi_1,\pi_2\rangle$ of cycle types $(k_1,1,\ldots,1), (1,\ldots,1,k_2)$ and $(k_1,k_2),$ respectively.
 \end{enumerate}
\end{remark}

\begin{lemma} \label{lemma:k+1..+1}
 Let $\pi\in S_n$ be of cycle type $(k,1,\ldots,1), k\in \mathbb{N.}$ Then $f\pi f^{-1}=\pi$ if and only if $f\in \langle \pi\rangle \ast S_{n-k},$ where $\langle \pi\rangle \ast S_{n-k}=\{\pi_1.\pi_2: \pi_1\in \langle \pi\rangle\,\mbox{and}\, \pi_2  \in S_{n-k}\}$ and $S_{n-k}$ is the symmetric group of degree $(n-k)$ on the set $\{k+1,\ldots,n\}.$
\end{lemma}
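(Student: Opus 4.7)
The plan is to decompose $\{1,\ldots,n\}$ into two $f$-invariant subsets, namely the support $S=\{a_1,\ldots,a_k\}$ of the $k$-cycle $\pi_1$ (so that $\pi=\pi_1$ on $S$) and the fixed-point set $F=\{k+1,\ldots,n\}$, and then reduce the problem to Lemma \ref{lemma:full cycle} on $S$ together with an unconstrained problem on $F$.

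First I would establish that any $f\in S_n$ satisfying $f\pi f^{-1}=\pi$ preserves the partition $\{S,F\}$ setwise. If $f(a_i)=b$ for some $b\in F$, then applying $f\pi=\pi f$ at $a_i$ gives $f(a_{i+1})=\pi f(a_i)=\pi(b)=b=f(a_i)$, which contradicts injectivity of $f$ since $a_{i+1}\neq a_i$. Hence $f(S)\subseteq S$, and finiteness forces $f(F)\subseteq F$ as well.

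Next, restricting $f$ to $S$ yields a bijection $g:=f|_S$ of $S$ with $g\pi_1 g^{-1}=\pi_1$, so by Lemma \ref{lemma:full cycle} applied to the $k$-cycle $\pi_1$ on the set $S$, we obtain $g\in\langle\pi_1\rangle$. Extending $g$ by the identity on $F$ produces an element of $\langle\pi\rangle$, since $\pi$ fixes $F$ pointwise and acts as $\pi_1$ on $S$. On the other hand, restricting $f\pi f^{-1}=\pi$ to $F$ reduces to the tautology $f\mathrm{id}_Ff^{-1}=\mathrm{id}_F$, which places no constraint on $h:=f|_F$ beyond $h\in S_{n-k}$. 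Writing $f=g\cdot h$ (interpreting $g$ as identity on $F$ and $h$ as identity on $S$) gives $f\in\langle\pi\rangle\ast S_{n-k}$ as required. The converse is immediate: powers of $\pi$ commute with $\pi$, and elements of $S_{n-k}$ commute with $\pi$ since their supports are disjoint from $S$.

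The only mildly delicate step is the setwise invariance of $S$ and $F$ under $f$, and this is handled by the short injectivity argument above; the remainder is a direct reduction to the previously proved Lemma \ref{lemma:full cycle}.
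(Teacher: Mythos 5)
Your proof is correct and follows essentially the same route as the paper's: your setwise-invariance argument (that $f$ cannot map a point of the $k$-cycle's support into the fixed-point set without violating injectivity) is exactly the paper's Case 2, and your determination of $f$ on the support is the paper's Case 1, except that you cleanly delegate it to Lemma \ref{lemma:full cycle} applied to the $k$-cycle on its support whereas the paper redoes that computation inline. No gaps; both arguments implicitly assume $k\geq 2$, as the statement requires.
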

\begin{proof}
It is easy to check that $\langle \pi\rangle \ast S_{n-k}$ is a subgroup of $S_n.$
Let $\pi=(a_1\ldots a_{k}),a_i\in\{1,\ldots,k\}.$ Then for $1\leq i \leq k,$
$\pi^{l}(a_i)=a_{i+l}=a_j$ where $i+l\equiv j(\mbox{mod}\,k)$ and $\pi^{l}(a_i)=a_i$ for $i=k+1,\ldots,n.$ Now $f\pi f^{-1}=\pi$ implies $f\pi(a_i)=\pi f(a_i)$ for $i=1,\ldots,n.$ Thus $f(a_{i+1})=\pi^i f(a_1)$ whenever $1\leq i \leq k.$ Now we have the following cases under consideration.

Case $1$: Let $f(a_1)=a_p$ for some $p\in\{1,\ldots,k\}.$ Then $f(a_{i})=\pi^{i-1} f(a_1)=\pi^{i-1} (a_p)=a_{p+i-1}=\pi^{p-1}(a_{i})$ for all $a_{i}, i=1,\ldots,k.$ Clearly $f$ can be any permutation when restricted to the set $\{k+1,\ldots,n\}.$ 
Hence $f$ is of the form $\pi^{p-1}\sigma$ for some $p\in\{1,\ldots,k\}$ and $\sigma\in S_{n-k}.$

Case $2$: Let $f(a_1)=a_p$ for some $p\in\{k+1,\ldots,n\}.$ Then $f(a_{i})=\pi^{i-1} f(a_1)=\pi^{i-1} (a_p)=a_{p}$ for all $a_{i}, i=1,\ldots,k.$ Which is not possible.

Thus $f\pi f^{-1}=\pi$ yields $\langle \pi\rangle \ast S_{n-k}.$ The proof for the converse follows directly.
\end{proof}

Let $\beta\in S_{n}.$ Let $k_{1},k_{2},\ldots,k_{s}$ be distinct integers which appear as lengths of cycles in the cycle type of $\beta,$ and $m_{i}$ is the number of cycles of length $k_{i}$ in $\beta,$   $i=1,2,\hdots,s.$ 
Then, the number of conjugates of $\beta,$  $N^{\beta}$, i.e. the number of permutations in $S_n$ with same cycle type as $\beta,$ is given by \cite{dummit1991}
$$N^{\beta}=\frac{n!}{\left(m_1!k_1^{m_1}\right)\left(m_2!k_2^{m_2}\right)\ldots\left(m_s!k_s^{m_s}\right)}.$$   

Now we proceed to determine the number of equivalence graph classes in the set of all digraphs $G_{\{\pi,\tau\}}$ when $\pi,\tau$ have specific cycle types. For brevity we write cycle type of a permutation $\pi$ as $ct(\pi).$ Note that, if 
$G_{\{\pi_1,\tau_1\}}$ and  $G_{\{\pi_1,\tau\}}$ are  isomorphic digraphs then clearly by Proposition \ref{prop} $f\pi_1 f^{-1}=\pi_1$ and $f\tau_1f^{-1}=\tau$ hold simultaneously for some $f\in S_n.$ Let $\pi\in S_n$ be such that $ct(\pi)=ct(\pi_1)$ and hence $\pi=g\pi_1g^{-1}$ for some $g\in S_n.$ Then $\pi=gf\pi_1f^{-1}g^{-1}$ and $g\tau g^{-1}=gf \tau_1 f^{-1}g^{-1}.$ Therefore, $ G_{\{\pi,g\tau g^{-1}\}},  G_{\{\pi_1,\tau_1\}}$ and $G_{\{\pi_1,\tau\}}$ are isomorphic. Hence, the following is true for a pair $(\pi_1, \tau_1)\in S_n\times S_n:$
\begin{eqnarray} \bigcup_{\pi,\tau \in S_n, ct(\pi)=ct(\pi_1), ct(\tau)=ct(\tau_1)} [G_{\{\pi,\tau\}}] &=& \left\{G_{\{\pi, \tau\}} : \pi \,\mbox{and}\, \tau \, \mbox{have same cycle type as} \, \pi_1, \tau_1 \,\mbox{respectively}\right\} \nonumber \\ &=& \bigcup_{\pi\in S_n, \,ct(\pi)=ct(\pi_1)}\left\{G_{\{\pi, \tau\}} : \tau \, \mbox{has same cycle type as} \, \tau_1\right\}.\label{eqn:ct}\end{eqnarray} Thus we conclude that we first need to identify the digraphs in equation (\ref{eqn:ct}) in order to determine the equivalence classes of $\{G_{\{\pi,\tau\}}\tm{: \pi},\tau\in S_n\}.$ 

First, we have the following propositions.

\begin{proposition} \label{pro:both n prime}
Let $n>2$ be a prime number. Then the number of equivalence graph classes in $\{G_{\{\pi,\beta\}}:\beta,\pi\in S_n\,\mbox{are both $n$-cycles}\}$ is $n+k-1,$ where $k=\frac{(n-1)!-(n-1)}{n}.$
\end{proposition}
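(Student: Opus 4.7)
The plan is to use equation (\ref{eqn:ct}) to reduce the problem to counting orbits of a single cyclic group action. Specifically, fix one $n$-cycle $\pi_1 \in S_n$. Every equivalence graph class corresponding to a pair of $n$-cycles has some representative of the form $G_{\{\pi_1,\tau\}}$, because any $\pi$ with $ct(\pi)=(n)$ is conjugate to $\pi_1$ by some $g \in S_n$ and applying this $g$ carries $G_{\{\pi,\tau\}}$ to $G_{\{\pi_1, g\tau g^{-1}\}}$. Thus we only need to count the equivalence classes among $\{G_{\{\pi_1,\tau\}} : ct(\tau)=(n)\}$.

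By Proposition \ref{prop}, two such graphs $G_{\{\pi_1,\tau\}}$ and $G_{\{\pi_1,\tau'\}}$ are isomorphic if and only if there exists $f \in S_n$ with $f\pi_1 f^{-1}=\pi_1$ and $f\tau f^{-1}=\tau'$. Lemma \ref{lemma:full cycle} tells us that $f\pi_1 f^{-1}=\pi_1$ is equivalent to $f \in \langle \pi_1 \rangle$. Therefore the equivalence classes are in bijection with the orbits of the cyclic subgroup $\langle \pi_1 \rangle$ acting by conjugation on the set $C_n$ of all $n$-cycles in $S_n$.

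Since $n$ is prime, $|\langle \pi_1 \rangle|=n$ forces every orbit to have size either $1$ or $n$ by the orbit--stabilizer theorem. An $n$-cycle $\tau \in C_n$ has trivial orbit precisely when $\pi_1 \tau \pi_1^{-1}=\tau$, i.e.\ when $\pi_1$ lies in the stabilizer of $\tau$ under conjugation; by Lemma \ref{lemma:full cycle} this is equivalent to $\pi_1 \in \langle \tau \rangle$. Since both $\langle \pi_1 \rangle$ and $\langle \tau \rangle$ are cyclic of prime order $n$, this forces $\langle \tau \rangle = \langle \pi_1 \rangle$, so $\tau$ is a non-identity power of $\pi_1$. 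By Lemma \ref{lemma:n prime} all $n-1$ non-identity powers of $\pi_1$ are themselves $n$-cycles, which yields exactly $n-1$ singleton orbits.

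Finally, $|C_n| = (n-1)!$, so the remaining $(n-1)!-(n-1)$ elements of $C_n$ are partitioned into orbits of size $n$, giving $\frac{(n-1)!-(n-1)}{n}=k$ orbits of full size. Adding, the total number of equivalence graph classes is $(n-1)+k=n+k-1$, as claimed. The only delicate step is the reduction to orbits of $\langle \pi_1 \rangle$ rather than of all of $S_n$; once that is in place, the count is immediate from primality of $n$ together with Lemmas \ref{lemma:full cycle} and \ref{lemma:n prime}.
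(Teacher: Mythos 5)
Your proof is correct and follows essentially the same route as the paper: both fix a single $n$-cycle $\pi_1$, reduce via Proposition \ref{prop} and Lemma \ref{lemma:full cycle} to counting orbits of $\langle\pi_1\rangle$ acting by conjugation on the set of $n$-cycles, identify the $n-1$ singleton orbits as the non-identity powers of $\pi_1$, and count $k$ remaining orbits of size $n$. Your explicit appeal to the orbit--stabilizer theorem is merely a cleaner packaging of the paper's direct argument that distinct $f_1,f_2\in\langle\pi_1\rangle$ give distinct conjugates of any $\beta\notin\langle\pi_1\rangle$.
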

\begin{proof}
The number of $n$-cycles in $S_n$ is $(n-1)!.$
Let $\pi\in S_n$ be a $n$-cycle. By Lemma \ref{lemma:full cycle}, $f \pi f^{-1}=\pi$ if and only if $f\in \langle \pi \rangle.$ 
 For $\beta \in S_n$ let us denote the multi set $\{f\beta f^{-1}:f\in \langle \pi \rangle\}$ as $\S_{\beta},$ which is the same as the orbit of $\beta$ when $\langle \pi \rangle$ acts on itself by conjugation. Then by Proposition \ref{prop}, $\{G_{\{\pi,\beta'\}}:\beta'\in \S_{\beta}\}$ is the collection of all digraphs isomorphic to $G_{\{\pi,\beta\}}.$ 
 
First consider $\beta \in \langle \pi \rangle$ as $n$-cycle. Then obviously $f\beta f^{-1}=\beta$ for any $f \in \langle \pi \rangle,$ so that $\S_{\beta}=\{\beta\}.$
Hence for $\beta,\beta'\in \langle \pi \rangle \setminus \{id\}$ with $\beta\neq\beta',$
$\S_{\beta}\cap \S_{\beta'}=\emptyset.$
Thus we have $(n-1)$ non isomorphic digraphs $G_{\{\pi,\beta_0\}},G_{\{\pi,\beta_1\}},\ldots,G_{\{\pi,\beta_{n-2}\}},$ where $\beta_i=\pi^{i+1},$  $i=0,\ldots,n-2.$

Now we choose an $n$-cycle $\beta \not\in \langle \pi \rangle.$ Clearly there can be $(n-1)!-(n-1)$ of such $\beta.$ 
By Lemma \ref{lemma:n prime}, $f\in \langle \pi \rangle$ is a $n$-cycle and 
$ \langle f \rangle=\langle \pi \rangle.$
If possible, for $f_1,f_2\in\langle \pi \rangle$ and $f_1\neq f_2,$ let $f_1\beta f_1^{-1}=f_2\beta f_2^{-1}.$ Which further implies $(f_1^{-1}f_2)\beta (f_1^{-1}f_2)^{-1}=\beta$ where $id\neq f_1^{-1}f_2\in \langle \pi \rangle$ and hence $\beta\in \langle f_1^{-1}f_2 \rangle =\langle \pi \rangle$ by Lemma \ref{lemma:full cycle}. This contradiction shows that $\S_{\beta}$ contains $n$ distinct elements, whenever $\beta \not\in \langle \pi \rangle$ and $ct(\beta)=ct(\pi).$ Thus, by choosing $\beta=\beta_{n-1}\not \in \langle \pi \rangle,$  we have the digraph $G_{\{\pi,\beta_{n-1}\}},$ which is non isomorphic to $G_{\{\pi,\beta_i\}},i=0,\ldots,n-2.$ 
Next, consider $\beta=\beta_n\not\in\langle \pi \rangle\cup\S_{\beta_{n-1}}.$ Then similarly 
$\S_{\beta_n}$ contains all the distinct elements and $\S_{\beta_n}\cap\S_{\beta_{n-1}}=\emptyset$, while it is obvious that $\S_{\beta_n}\cap\S_{\beta_i}=\emptyset$ for $i=0,\ldots,n-2.$ Thus we repeat this process by choosing $\beta=\beta_i\not\in\langle \pi \rangle\cup\S_{\beta_{n-1}}\cup\ldots\cup \S_{\beta_{i-1}},$ where $\S_{\beta_{i}}\cap \S_{\beta_{j}}=\emptyset$ for $j=0,\ldots,i-1$ and $i\geq (n-1);$ such that $ \langle \pi \rangle\bigcup_{i\geq(n-1)} \S_{\beta_i}$ is the set of all $n$-cycles in $S_n.$ 
Hence we have $k=\frac{(n-1)!-(n-1)}{n}$ non isomorphic digraphs $G_{\{\pi,\beta_i\}}$ whenever $\beta_i\not\in \langle \pi \rangle$ and $i=n-1,\ldots,n-2+k.$ Thus
$$\{G_{\{\pi,\beta\}}:\beta\in S_n\, \mbox{is a}\, n\mbox{-cycle}\}
=\bigcup_{\beta_i,i=0,\ldots,n-2+k}\{G_{\{\pi,\beta'_i\}}:\beta'_i\in \S_{\beta_i}\}, k=\frac{(n-1)!-(n-1)}{n}. $$ This completes the proof.
\end{proof}

\begin{proposition} \label{prop:1 n prime}
Let $n>2$ be a prime number and $\pi \in S_n$ be an $n$-cycle. Suppose $\beta \in S_n$ is not conjugate to $\pi,$ i.e. $ct(\beta)$ is other than the $(n).$ Then the number of equivalence graph classes in $\{G_{\{\pi',\beta'\}}: \pi', \beta'\in S_n\,\, \mbox{with}\,\, ct(\pi')=ct(\pi)\,\, \mbox{and}\,\, ct(\beta')=ct(\beta)\}$ is $N^{\beta}/n.$ 
\end{proposition}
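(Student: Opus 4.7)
The plan is to reduce the count to the orbits of a conjugation action on permutations of fixed cycle type, and then show that action is free. First I would fix a single representative $\pi \in S_n$ of cycle type $(n)$. Using the reduction already packaged in equation (\ref{eqn:ct}), every equivalence class of $G_{\{\pi',\beta'\}}$ with $ct(\pi')=(n)$ and $ct(\beta')=ct(\beta)$ admits a representative of the form $G_{\{\pi,\beta'\}}$: indeed, if $\pi' = g\pi g^{-1}$, then by Proposition \ref{prop} the map $g^{-1}$ defines an isomorphism $G_{\{\pi',\beta'\}} \to G_{\{\pi,\, g^{-1}\beta' g\}}$, and conjugation preserves cycle type so $g^{-1}\beta' g$ still lies in the relevant set.

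Next I would identify the equivalence relation on these representatives. Proposition \ref{prop} says $G_{\{\pi,\beta_1\}}$ and $G_{\{\pi,\beta_2\}}$ are isomorphic iff some $f \in S_n$ satisfies $f\pi f^{-1} = \pi$ and $f\beta_1 f^{-1} = \beta_2$. Lemma \ref{lemma:full cycle} forces $f \in \langle \pi \rangle$. Thus the equivalence classes in question are in bijection with the orbits of the conjugation action of $\langle \pi \rangle$ on the set $\mathcal{B} = \{\beta' \in S_n : ct(\beta') = ct(\beta)\}$, a set of cardinality $N^\beta$.

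The crux is to show that this action is free, i.e., the stabilizer of every $\beta' \in \mathcal{B}$ inside $\langle \pi \rangle$ is trivial, so each orbit has size $|\langle \pi \rangle| = n$ and the orbit count equals $N^\beta/n$. Suppose toward contradiction that $f \in \langle \pi \rangle \setminus \{id\}$ and $f\beta' f^{-1} = \beta'$. Because $n$ is prime, Lemma \ref{lemma:n prime} tells us that $f$ itself is an $(n)$-cycle, whence $\langle f \rangle = \langle \pi \rangle$. Now apply Lemma \ref{lemma:full cycle} with $f$ in the role of the full cycle: the relation $f\beta' f^{-1} = \beta'$ forces $\beta' \in \langle f \rangle = \langle \pi \rangle$. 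But every nonidentity element of $\langle \pi \rangle$ is again an $(n)$-cycle by Lemma \ref{lemma:n prime}, so $ct(\beta') = (n)$, contradicting $ct(\beta) \neq (n)$.

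The main obstacle is not computational but conceptual: one must combine Proposition \ref{prop} with both Lemmas \ref{lemma:full cycle} and \ref{lemma:n prime} in just the right way to exclude nontrivial stabilizers. A minor edge case is $\beta = id$, for which $N^\beta = 1$ and the formula $N^\beta/n$ is not an integer; the statement is understood to exclude this degenerate case (or equivalently to restrict to $\beta$ with $ct(\beta)$ containing at least one cycle of length greater than one).
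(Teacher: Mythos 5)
Your proof is correct and follows essentially the same route as the paper's: both reduce the count to the orbits of the conjugation action of $\langle \pi \rangle$ on the conjugacy class of $\beta$ and establish freeness of that action by combining Lemma \ref{lemma:full cycle} with Lemma \ref{lemma:n prime} (the paper does this implicitly by citing the argument inside Proposition \ref{pro:both n prime}). Your remark that $\beta = id$ must be excluded is a valid caveat that the paper's statement glosses over.
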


\begin{proof} 
 Since $\beta$ is not an $n$-cycle, by Lemma \ref{lemma:n prime} $\beta \not \in \langle \pi \rangle.$ Then similarly as Proposition \ref{pro:both n prime} each $\S_{\beta}=\{f\beta f^{-1}:f\in \langle \pi \rangle\}$ contains $n$ distinct elements and $\S_{\beta_i}\cap \S_{\beta_j}=\emptyset$ for $\beta_i\neq \beta_j,$ where both $\beta_i$ and $\beta_j$ are conjugate to $\beta.$ Hence the number of non isomorphic digraphs in  $\{G_{\{\pi',\beta'\}}: \pi'\,\mbox{and}\, \beta'\, \mbox{are conjugate to}\, \pi\, \mbox{and}\,\beta\,\mbox{respectively}\}$ is equal to $\frac{1}{n}$ times the number of elements in $S_n$ having the same cycle type as $\beta.$
\end{proof}

Before the next proposition, we revisit the Goldbach's conjecture, which is verified computationally for numbers less than or equal to $4\times 10^{18}.$ The conjecture states that every even number greater than $2$ is the sum of two prime numbers \cite{wanggoldbach}.

\begin{proposition} \label{pro:two compo prime}
Let $n=k_1+k_2,$ where both $k_1,k_2\in \mathbb{N}$ be prime numbers.
Then, the number of equivalence graph classes in
 $\{G_{\{\pi,\beta\}}:\beta,\pi\in S_n\,\mbox{are of cycle type}\,(k_1,k_2)\}$ is $(k_1-1)(k_2-1)+(k_1-1)k+(k_2-1)l+m,$ where
 \begin{eqnarray*}
 && k=\frac{(k_2-1)!-(k_2-1)}{k_2},l=\frac{(k_1-1)!-(k_1-1)}{k_1} \\ && m=\frac{\frac{n!}{k_1k_2}-(k_1-1)(k_2-1)-k_2(k_1-1)k-k_1(k_2-1)l}{k_1k_2}.
 \end{eqnarray*}
\end{proposition}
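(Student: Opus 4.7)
The plan is to reduce the count of equivalence graph classes to an orbit count under a conjugation action, then partition into two cases by cycle support.

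I would first fix a representative $\pi = \pi_1 \pi_2$ of cycle type $(k_1)+(k_2)$, writing $A = \mathrm{supp}(\pi_1)$ and $B = \mathrm{supp}(\pi_2)$. Combining Proposition \ref{prop} with equation (\ref{eqn:ct}) and Lemma \ref{lemma:n=k_1k_2}, the quantity to compute equals the number of orbits of the abelian group $\langle \pi_1, \pi_2 \rangle$ (of order $k_1 k_2$) acting by conjugation on the set of all $\beta \in S_n$ with cycle type $(k_1)+(k_2)$. Because every element of $\langle \pi_1, \pi_2 \rangle$ preserves both $A$ and $B$ setwise, and (under the tacit assumption $k_1 \neq k_2$ so that the $k_1$-cycle of $\beta = \beta_1\beta_2$ is uniquely identified) the set of $\beta$ splits cleanly, the orbits partition into Case I, where $\mathrm{supp}(\beta_1) = A$, and Case III, where $\mathrm{supp}(\beta_1) \neq A$.

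In Case I, the actions of $\langle \pi_1 \rangle$ and $\langle \pi_2 \rangle$ decouple (each $\pi_i$ commutes with the $\beta$-cycle whose support is disjoint from it), so I would apply the orbit count of Proposition \ref{pro:both n prime} inside $\mathrm{Sym}(A)$ and $\mathrm{Sym}(B)$ separately to get $(k_1 - 1) + l$ and $(k_2 - 1) + k$ orbits, respectively. Multiplying gives $\big[(k_1-1)+l\big]\big[(k_2-1)+k\big] = (k_1-1)(k_2-1) + (k_1-1)k + (k_2-1)l + lk$ orbits in Case I, accounting for $(k_1-1)!(k_2-1)!$ permutations (using the identity $(k_i-1) + k_i \cdot (\text{orbit count}) = (k_i-1)!$).

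In Case III, I would show that the stabilizer of $\beta$ inside $\langle \pi_1, \pi_2 \rangle$ is trivial. Any nontrivial $f \in \langle \pi_1, \pi_2 \rangle$ has, by Remark \ref{remark:k_1k_2prime}, cycle type $(k_1)+(1)^{k_2}$, $(1)^{k_1}+(k_2)$, or $(k_1)+(k_2)$, and its $k_1$-cycle is supported on $A$; if also $f \in \mathrm{Stab}(\beta) = \langle \beta_1, \beta_2 \rangle$, the analogous description coming from $\beta$ (together with $k_1 \neq k_2$ to identify the $k_1$-cycle uniquely) forces the $k_1$-cycle of $f$ to be a power of $\beta_1$, hence $\mathrm{supp}(\beta_1) = A$, contradicting Case III. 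Therefore every Case III orbit has size $k_1 k_2$, and since the number of Case III permutations is $\frac{n!}{k_1 k_2} - (k_1-1)!(k_2-1)!$, Case III contributes $\frac{1}{k_1 k_2}\big[\frac{n!}{k_1 k_2} - (k_1-1)!(k_2-1)!\big]$ orbits. Summing the two cases and using $(k_i-1)! = (k_i-1) + k_i l_i$ (with $l_1 = l$, $l_2 = k$) to rewrite $(k_1-1)!(k_2-1)!$ reproduces the stated formula, with $m = lk + \frac{1}{k_1 k_2}\big[\frac{n!}{k_1 k_2} - (k_1-1)!(k_2-1)!\big]$ matching the closed-form expression in the statement.

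The main obstacle is the Case III stabilizer triviality; this is the only step that genuinely exploits both the primality of $k_1$ and $k_2$ (via Remark \ref{remark:k_1k_2prime}) and the distinctness $k_1 \neq k_2$ (to identify the $k_1$-cycle of $f$ with a power of $\beta_1$ rather than $\beta_2$). The concluding algebraic simplification to the compact form for $m$ given in the statement is routine.
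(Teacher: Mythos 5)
Your argument is correct and reaches the stated count, but it organizes the computation differently from the paper. Both proofs rest on the same reduction: by Proposition \ref{prop}, equation (\ref{eqn:ct}) and Lemma \ref{lemma:n=k_1k_2}, one fixes $\pi=\pi_1\pi_2$ and counts orbits of $\langle\pi_1,\pi_2\rangle$ acting by conjugation on the $\beta$'s of cycle type $(k_1)+(k_2)$. The paper then runs four sub-cases according to whether each cycle factor of $\beta$ lies in $\langle\pi_1\rangle$ and/or $\langle\pi_2\rangle$, computing orbit sizes $1$, $k_2$, $k_1$ and $k_1k_2$ respectively and obtaining the four summands $(k_1-1)(k_2-1)$, $(k_1-1)k$, $(k_2-1)l$ and $m$ directly; notably, all four of its cases take $\beta$'s cycles supported on $A=\{1,\dots,k_1\}$ and $B=\{k_1+1,\dots,n\}$, and the support-mixing $\beta$'s are absorbed into $m$ without explicit discussion. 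Your two-case split by support is cleaner: the product-action computation $\bigl[(k_1-1)+l\bigr]\bigl[(k_2-1)+k\bigr]$ packages the paper's first three cases plus its support-preserving fourth case (the extra $lk$ being exactly the paper's case-4 contribution restricted to supports $A,B$), and your Case III supplies the stabilizer-triviality argument for the support-mixing $\beta$'s that the paper's write-up omits. Two small points to tighten: first, in Case III your contradiction is phrased via the $k_1$-cycle of $f$, but a nontrivial $f=\pi_2^q$ of cycle type $(1)+\ldots+(1)+(k_2)$ has no $k_1$-cycle; you need the symmetric argument (such an $f$ must equal a power of $\beta_2$, forcing $\mathrm{supp}(\beta_2)=B$ and hence $\mathrm{supp}(\beta_1)=A$, again a contradiction). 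Second, the assumption $k_1\neq k_2$, which you correctly flag as tacit, is genuinely needed (otherwise $N^{\beta}=n!/(2k_1k_2)$ and the stated formula fails), and it is equally tacit in the paper; for the paper's application $(k_1,k_2)=(3,2)$ it is harmless. Your closing algebraic identification of $m$ with $lk+\tfrac{1}{k_1k_2}\bigl[\tfrac{n!}{k_1k_2}-(k_1-1)!(k_2-1)!\bigr]$ checks out.
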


\begin{proof}
 Let $\pi=\pi_1\pi_2 \in S_n$ be of cycle type $(k_1,k_2),$ where $\pi_1$  and $\pi_2$ are disjoint cycles of length $k_1$ and $k_2$, respectively, and $\S_{\beta}=\{f\beta f^{-1}:f\in \langle \pi_1,\pi_2 \rangle\}$ be a multi set. Let $\beta\in S_n$ be such that $ct(\pi)=ct(\beta).$ Clearly $N^{\pi}=\frac{n!}{k_1k_2}.$

First consider $\beta \in \langle \pi_1,\pi_2 \rangle,$ then by Lemma \ref{lemma:n=k_1k_2}, $f\beta f^{-1}=\beta, f \in \langle \pi_1,\pi_2 \rangle.$ So that $\S_{\beta}=\{\beta\}.$ Hence if $\beta,\beta'\in \langle \pi_1,\pi_2 \rangle \setminus \{id\}$ are of cycle type $(k_1, k_2)$ with $\beta\neq\beta',$ then
$\S_{\beta}\cap \S_{\beta'}=\emptyset.$
Thus by Remark \ref{remark:k_1k_2prime}, we have $(k_1-1)(k_2-1)$ non isomorphic digraphs $G_{\{\pi,\beta_{ij}\}},$ where $\beta_{ij}=\pi_1^{i}\pi_2^{j}$ for $i=1,\ldots,k_1-1$ and $j=1,\ldots,k_2-1.$

Now we choose $\beta \not\in \langle \pi_1,\pi_2 \rangle$ such that $\beta\in \langle \pi_1,\bar{\pi}_2 \rangle,$ where $\bar{\pi}_2\not \in \langle \pi_2\rangle$ is a $k_2$-cycle on the set $\{k_1+1,\ldots,n\}.$ Clearly $\beta$ is of the form $\pi_1^i\bar{\pi}_2^j$ for $1\leq i \leq k_1-1$ and $1\leq j \leq k_2-1.$ If $id\neq f\in \langle \pi_1,\pi_2 \rangle$ is of cycle type $(k_1,1,\ldots,1),$ i.e. $f=\pi_1^p$ for some $ p\in \{1,\ldots, k_1-1\},$ then $f\beta f^{-1}=f\pi_1^i\bar{\pi}_2^j f^{-1}=f\pi_1^if^{-1}f\bar{\pi}_2^j f^{-1}=f\pi_1^i f^{-1}\bar{\pi}_2^j.$ Hence by Lemma \ref{lemma:full cycle}, $f\beta f^{-1}=\pi_1^i\bar{\pi}_2^j=\beta.$
If $id\neq f\in \langle \pi_1,\pi_2 \rangle$ is of cycle type $(1,\ldots,1,k_2),$ i.e. $f=\pi_2^q$ for some $ q\in \{1,\ldots, k_2-1\},$ then $f\beta f^{-1}=f\pi_1^i\bar{\pi}_2^j f^{-1}=\pi_1^if\bar{\pi}_2^j f^{-1}.$ Since $k_2$ is prime then $\bar{\pi}_2\not \in \langle \pi_2\rangle$ yields $\bar{\pi}_2^j\not \in \langle \pi_2\rangle$ for all $j=1,\ldots,k_2-1.$ Now by Lemma \ref{lemma:full cycle}, $f\bar{\pi}_2^j f^{-1}=\bar{\pi}_2^j$ if and only if $\bar{\pi}_2^j\in \langle f \rangle =\langle \pi_2 \rangle,$ which is a contradiction. So, for all such $f,$  the elements in $\S_{\beta}$ are distinct, i.e. there are $k_2-1$ distinct elements. Further, if $id\neq f\in \langle \pi_1,\pi_2 \rangle$ is of cycle type $(k_1, k_2),$ i.e. $f=\pi_1^p\pi_2^q$ for some $p\in \{1,\ldots,k_1-1\}, q\in \{1,\ldots, k_2-1\},$ then $f\beta f^{-1}=f\pi_1^i\bar{\pi}_2^j f^{-1}=\pi_1^p\pi_1^i(\pi_1^p)^{-1} \pi_2^q\bar{\pi}_2^j (\pi_2^q)^{-1}=\pi_1^i \pi_2^q\bar{\pi}_2^j (\pi_2^q)^{-1}.$ So that this case turns into the case when $f$ is a $(1,\ldots,1,k_2)$ cycle, i.e. $f=\pi_2^q$ for some $ q\in \{1,\ldots, k_2-1\}.$ Thus all these result $1+k_2-1=k_2$ distinct elements in $\S_{\beta}.$ Therefore if we fix one $\pi_1,$ then by Proposition \ref{pro:both n prime}, there are $k$ choices for such $\beta,$ where $k$ is mentioned in the statement of the proposition. Hence for $\beta\in \langle \pi_1,\bar{\pi}_2 \rangle$ and $\bar{\pi}_2\not \in \langle \pi_2\rangle$ we have $(k_1-1)k$ non isomorphic digraphs $G_{\{\pi,\beta_{ij}\}}$ for $i=1,\ldots,k_1-1,j=k_2,\ldots,k_2-1+k.$

Similarly, for $\beta \not\in \langle \pi_1,\pi_2 \rangle$ and $\beta\in \langle \bar{\pi}_1,\pi_2 \rangle,$ where $\bar{\pi}_1\not \in \langle \pi_1\rangle$ is a $k_1$-cycle on the set $\{1,\ldots,k_1\},$ we get $k_1$ distinct elements in $\S_{\beta}$ and we have $l(k_2-1)$ non isomorphic digraphs $G_{\{\pi,\beta_{ij}\}}$ for $i=k_1,\ldots,k_1-1+l,j=1,\ldots,k_2-1,$ where $l$ is mentioned in the statement of the proposition.

Finally, consider $\beta \not\in \langle \pi_1,\pi_2 \rangle$ such that $\beta\in \langle \bar{\pi}_1,\bar{\pi}_2 \rangle,$ with $\bar{\pi}_1\not \in \langle \pi_1\rangle$ is a $k_1$-cycle and $\bar{\pi}_2\not \in \langle \pi_2\rangle$ is a $k_2$-cycle on the sets $\{1,\ldots,k_1\}$ and $\{k_1+1,\ldots,n\}$ respectively.
If $id\neq f \in \langle \pi_1,\pi_2 \rangle$ is of cycle type $(k_1,k_2),$ then by Lemma \ref{lemma:n=k_1k_2}, $f\beta f^{-1}=\beta$ indicates $\beta\in \langle \pi_1,\pi_2 \rangle,$ a contradiction.
If $id\neq f\in \langle \pi_1,\pi_2 \rangle$ is of cycle type $(k_1,1,\ldots,1)$ or $(1,\ldots,1,k_2)$ then clearly by Lemma \ref{lemma:k+1..+1} the relation $f\beta f^{-1}=\beta$ implies $\beta\in \langle \pi_1 \rangle\ast S_{n-k_1}$ or $\beta\in S_{n-k_2}\ast\langle \pi_2\rangle$\ respectively.  But $\beta$ is of cycle type $(k_1,k_2)$ and hence from both  the cases we obtain $\beta\in \langle \pi_1,\pi_2\rangle.$ This contradiction shows that there are $k_1k_2$ distinct elements in $\S_{\beta}.$ 
Thus whenever $\beta\in \langle \bar{\pi}_1,\bar{\pi}_2 \rangle$ with $\bar{\pi}_1\not \in \langle \pi_1\rangle$ and $\bar{\pi}_2\not \in \langle \pi_2\rangle,$ following a similar process as Proposition \ref{pro:both n prime}, we have $m$ non isomorphic digraphs, where $m$ is mentioned in the statement of the proposition.
\end{proof}

Then we have the following corollary whose proof is immediate from Propositions \ref{pro:both n prime}, \ref{prop:1 n prime}, and \ref{pro:two compo prime}.

\begin{corollary} \label{lemma:equi}
 The number of equivalence graph classes in
\begin{itemize}
    \item[(1)] $\{G_{\{\pi,\beta\}}:\beta,\pi\in S_5\,\mbox{are}\,5\mbox{-cycles}\}$ is $8,$ 
    \item[(2)] $\{G_{\{\pi,\beta\}}:\beta,\pi\in S_5,\,\mbox{where one of}\,\beta\,\mbox{or}\,\pi\,\mbox{is a}\,5\mbox{-cycle and the other is of cycle type}\,(3,2)\}$ is $4,$
    \item[(3)] $\{G_{\{\pi,\beta\}}:\beta,\pi\in S_5\,\mbox{are of cycle type}\,(3,2)\}$ is $5.$
    \end{itemize}
    \end{corollary}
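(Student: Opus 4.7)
The plan is to obtain each of the three counts by direct substitution into the three preceding propositions, since in every case the required primality or prime-partition hypothesis is satisfied; no new graph-theoretic argument is needed.

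For part (1), Proposition \ref{pro:both n prime} applies with $n=5$ prime. I would compute $k=\frac{(n-1)!-(n-1)}{n}=\frac{24-4}{5}=4$ and read off $n+k-1=8$ equivalence graph classes.

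For part (2), I would invoke Proposition \ref{prop:1 n prime}: with $n=5$ prime, one of the two permutations a $(5)$-cycle, and the other of cycle type $(3)+(2)\neq(5)$, the count is $N^\beta/n$. Using the standard conjugacy class formula $N^\beta=n!/\prod_i(m_i!\,k_i^{m_i})$, the number of permutations of cycle type $(3)+(2)$ in $S_5$ is $5!/(1!\cdot 3\cdot 1!\cdot 2)=20$, so the count is $20/5=4$.

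For part (3), the decomposition $5=3+2$ satisfies the hypothesis of Proposition \ref{pro:two compo prime} since both $k_1=3$ and $k_2=2$ are prime. The auxiliary quantities evaluate to $k=\frac{1!-1}{2}=0$ and $l=\frac{2!-2}{3}=0$, and then $m=\frac{5!/(k_1k_2)-(k_1-1)(k_2-1)-k_2(k_1-1)k-k_1(k_2-1)l}{k_1k_2}=\frac{20-2}{6}=3$, so the total number of equivalence graph classes is $(k_1-1)(k_2-1)+(k_1-1)k+(k_2-1)l+m=2+0+0+3=5$. No genuine obstacle arises here; the only point requiring a little care is the vanishing of both $k$ and $l$ in part (3), which forces all of the nontrivial contribution into the term $m$.
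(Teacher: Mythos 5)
Your proposal is correct and follows exactly the route the paper intends: the paper states the corollary is immediate from Propositions \ref{pro:both n prime}, \ref{prop:1 n prime}, and \ref{pro:two compo prime}, and your substitutions ($n=5$, $k=4$ giving $8$; $N^{\beta}/n = 20/5 = 4$; and $k_1=3$, $k_2=2$ with $k=l=0$, $m=3$ giving $5$) are all accurate. Your observation that the entire nontrivial contribution in part (3) is absorbed into $m$ is a nice explicit check that the paper leaves implicit.
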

Consequently, we have the following estimates for the trace of the product of permutation matrices with specific cycle types.
Observe that if $\pi=\pi_1\pi_2\in S_5$ is of cycle type $(3,2),$ then $\langle \sigma_1,\sigma_2\rangle=\langle \sigma \rangle.$ Now for $\pi,\beta,\beta' \in S_5,$ if $G_{\{\pi,\beta\}}$ and $G_{\{\pi,\beta'\}}$ are isomorphic digraphs, then $\beta'\in \S_{\beta},$ i.e. there exists $f\in \langle \pi \rangle$ such that $\beta'=f\beta f^{-1}.$ 
Hence $tr(P_{\beta'}P_{\pi})=tr(P_f^TP_{\beta}P_fP_f^TP_{\pi}P_f)=tr(P_{\beta}P_{\pi}).$ Using this fact, we have the following estimates for the trace of the product of permutation matrices with the specific cycle types.


 \begin{corollary} \label{cor:tr}
Let $P_{\beta},P_{\pi}\in \P_5$ where $\beta,\pi\in S_5.$ Then the following are true.
\begin{itemize}
    \item[(1)] $tr(P_{\beta}P_{\pi}) \in \{0,1,2,5\}$ if both $\beta,\pi$  are $5$-cycles.
    \item[(2)] $tr(P_{\beta}P_{\pi}) \in \{0,1,3\}$ if one of  $\beta,\pi$ is a $5$-cycle and the other is of cycle type $(3,2).$
    \item[(3)] $tr(P_{\beta}P_{\pi}) \in \{0,1,2,5\}$ if both $\beta,\pi$  are of cycle type $(3,2).$  
\end{itemize}
    \end{corollary}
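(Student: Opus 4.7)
The plan is to combine Corollary \ref{lemma:equi} with the observation that $tr(P_\beta P_\pi) = tr(P_{\pi\beta})$ equals the number of fixed points of $\pi\beta \in S_5$, a quantity that depends only on the cycle type of $\pi\beta$. By item (b) of the discussion following Proposition \ref{prop}, pairs $(\pi,\beta)$ lying in the same equivalence graph class $[G_{\{\pi,\beta\}}]$ produce conjugate products $\pi\beta$ and hence have equal traces. Since Corollary \ref{lemma:equi} gives $8$, $4$, and $5$ equivalence graph classes in the three cases respectively, it suffices to exhibit one representative pair per class and compute the resulting trace.

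To implement this, I would fix $\pi = (1\,2\,3\,4\,5)$ in cases (1) and (2), and $\pi = (1\,2\,3)(4\,5)$ in case (3), and then select representatives $\beta$ via the orbit analysis in Propositions \ref{pro:both n prime}--\ref{pro:two compo prime}. The singleton $\langle\pi\rangle$-orbits (or $\langle\pi_1,\pi_2\rangle$-orbits in case (3), which by Remark \ref{remark:k_1k_2prime}(1) coincides with $\langle\pi\rangle$) correspond to $\beta$ lying in the relevant stabilizing subgroup, for which $\pi\beta$ is immediate; the full-size orbits each require picking one explicit $\beta$ outside the stabilizer and multiplying out. For instance, in case (1) the four singleton orbits give $\pi \cdot \pi^k$ for $k=1,2,3,4$, which is a $5$-cycle (trace $0$) when $k<4$ and the identity (trace $5$) when $k=4$; the four size-$5$ orbits each contribute one further trace value obtained by direct computation.

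A parity argument confirms the stated containment cleanly: a $(5)$-cycle is even and a $(3)+(2)$-cycle is odd, so $\pi\beta$ is even in cases (1) and (3) and odd in case (2). The even cycle types of $S_5$ are $(1)^5$, $(2)+(2)+(1)$, $(3)+(1)+(1)$, $(5)$, with fixed-point counts $\{5,1,2,0\}$, while the odd cycle types are $(2)+(1)+(1)+(1)$, $(4)+(1)$, $(3)+(2)$, with counts $\{3,1,0\}$. These are precisely the claimed sets. The main obstacle is then the bookkeeping of the non-singleton orbits: choosing an explicit $\beta$ in each of the four size-$5$ $\langle\pi\rangle$-orbits on $5$-cycles outside $\langle\pi\rangle$ (and the analogous orbits in cases (2) and (3)) and verifying by direct multiplication that the computed traces all lie in the stated sets. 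Since Corollary \ref{lemma:equi} already caps the count of cases, this is a finite and essentially mechanical check.
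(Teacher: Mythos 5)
Your proposal is correct, and its main line --- reduce to one representative pair per equivalence graph class via Corollary \ref{lemma:equi} and the conjugacy observation (b) following Proposition \ref{prop}, then compute $tr(P_\beta P_\pi)=tr(P_{\pi\beta})$ as the number of fixed points of the product --- is exactly the argument the paper intends; the paper simply declares the corollary ``clear from the above propositions'' and records the class representatives in Table \ref{tab:table1}, whose products one checks by hand just as you describe. The one genuinely different ingredient you add is the parity argument, and it is worth emphasizing that it is not merely a ``confirmation'': it is by itself a complete and much shorter proof of the stated containments. Since a $(5)$-cycle is even and a $(3)+(2)$ permutation is odd, the product $\pi\beta$ is even in cases (1) and (3) and odd in case (2), and the fixed-point counts of the even cycle types of $S_5$ (namely $(1)^5$, $(2)^2+(1)$, $(3)+(1)^2$, $(5)$) are exactly $\{5,1,2,0\}$ while those of the odd types ($(2)+(1)^3$, $(4)+(1)$, $(3)+(2)$) are exactly $\{3,1,0\}$. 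This bypasses the equivalence-class machinery entirely for the membership claim; the class-by-class enumeration is only needed if one also wants to know that every value in each listed set is actually attained (which the paper implicitly uses later, e.g.\ $tr(QR)=2$ in case (ii)(F) and $tr(QR)=3$ in case (iii)(C) of the proof of Theorem \ref{theorem:ineq}), and for that purpose your plan of checking the Table \ref{tab:table1} representatives is the right, finite computation.
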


Even if the proofs of Corollary \ref{lemma:equi} and Corollary \ref{cor:tr} are clear from the above propositions, we mention in Table \ref{tab:table1} the explicit representations of permutations that belong to graph equivalence classes for different sets of digraphs $G_{\{\pi,\beta\}}$ as mentioned in Corollary \ref{lemma:equi}. 

\begin{table}[h!]
  \begin{center}
    \begin{tabular}{|l|c|} 
      \hline
      $G_{\{\pi,\beta\}} $ & $[G_{\{\alpha,\beta\}}], \alpha, \beta \in S_5$   \\
      $\pi, \beta \in S_5,$   $a_i\in\{1,\hdots,5\}$ & $\{\alpha,\beta\}$ \\
      \hline \hline
      $\pi, \beta$ are $5$-cycles  & $\{\pi, \pi\}, \{\pi, \pi^2\}, \{\pi, \pi^3\}, \{\pi, \pi^4\}$\\
    $ \pi=(a_1a_2a_3a_4a_5)$ & $\{\pi, (a_1a_2a_3a_5a_4)\}, \{\pi, (a_1a_2a_4a_5a_3)\}, $ \\
     & $\{\pi, (a_1a_2a_5a_4a_3)\}, \{\pi, (a_1a_3a_5a_4a_2)\}$\\
      \hline
      one of $\pi, \beta$ is $5$-cycle and & $\{\pi,(a_1a_2a_3)(a_4a_5)\}, \{\pi,(a_1a_2a_4)(a_3a_5)\}$ \\
      another has of cycle type $(3,2)$ & $\{\pi,(a_1a_3a_2)(a_4a_5)\}, \{\pi,(a_1a_4a_2)(a_3a_5)\}$ \\ 
      $ \pi=(a_1a_2a_3a_4a_5)$ & \\
      \hline
      $\pi,\beta$ have cycle type $(3,2)$ & $\{\pi,\pi\}, \{\pi,\pi^5\}$ \\
       $ \pi=(a_1a_2a_3)(a_4a_5)$& $\{\pi,(a_1a_2a_4)(a_3a_5)\}$, $\{\pi,(a_1a_4a_2)(a_3a_5)\}$\\
       & $\{\pi, (a_1a_4a_5)(a_2a_3)\}$ \\
       \hline
    \end{tabular}
    \caption{Equivalence graph classes}   \label{tab:table1}
  \end{center}
\end{table}


\section{$k$-th power of trace-zero DS matrices} \label{sec:3}

In this section, we prove that the square of a matrix in $\Omega_5^0$ cannot be the identity matrix.
Then we determine the trace-zero DS matrices whose $k$-th power is also a trace-zero matrix, where $k\in\{2,3,4,5\}.$  
We show that if the trace of the square or the cube of a matrix in $\Omega_5^0$ is zero then the matrix is a convex combination of at most three or two permutation matrices, respectively. Finally, we prove that all such $A\in \Omega_5^0,$ for which either $\mathrm{tr}(A^4)=0$ or $\mathrm{tr}(A^5)=0$ are permutation matrices corresponding to $5$-cycle or $(3,2)$ cycle, respectively.




\begin{proposition} \label{pro_tr5}
Let $A \in \Omega_5^0.$ Then $tr(A^2)\neq5.$
\end{proposition}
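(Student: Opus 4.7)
The plan is to use Birkhoff's theorem to decompose $A$ into a convex combination of derangement permutation matrices, then combine Corollary \ref{cor:tr} with a direct computation of $tr(P_\pi^2)$ to bound $tr(A^2)$ strictly below $5$.

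First, I would invoke Birkhoff's theorem to write $A = \sum_{i=1}^m \lambda_i P_{\pi_i}$ with $\lambda_i > 0$, $\sum_i \lambda_i = 1$, and $\pi_i \in S_5$. Since $tr(A)=0$ and each $tr(P_{\pi_i})$ is a nonnegative integer, every $\pi_i$ must be a derangement; in $S_5$ this forces each $\pi_i$ to have cycle type $(5)$ or $(3)+(2)$, i.e.\ $\pi_i \in S_5^0$.

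Next, I would expand
$$tr(A^2) = \sum_{i,j} \lambda_i \lambda_j \, tr(P_{\pi_i} P_{\pi_j})$$
and separate the diagonal terms ($i=j$) from the off-diagonal ones ($i \neq j$). For off-diagonal terms, Corollary \ref{cor:tr} gives the uniform bound $tr(P_{\pi_i} P_{\pi_j}) \leq 5$ across all three possible cycle-type combinations of pairs from $S_5^0$. For diagonal terms, $P_{\pi_i}^2 = P_{\pi_i^2}$, and a quick case check suffices: since $5$ is prime, the square of a $(5)$-cycle is again a $(5)$-cycle (no fixed points); and the square of a $(3)+(2)$-cycle has cycle type $(3)+(1)+(1)$ (exactly two fixed points). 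Hence $tr(P_{\pi_i}^2) \leq 2$ for every $i$.

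Assembling these bounds, I get
$$tr(A^2) \,\leq\, 2 \sum_i \lambda_i^2 + 5 \sum_{i \neq j} \lambda_i \lambda_j \,=\, 5 - 3 \sum_i \lambda_i^2 \,<\, 5,$$
the strict inequality following from $\sum_i \lambda_i^2 > 0$ (since the $\lambda_i$ are positive and sum to $1$). There is no serious obstacle; the only nontrivial ingredient is Corollary \ref{cor:tr}, and the diagonal bound $tr(P_{\pi_i}^2) \le 2$ is immediate from the cycle-type structure of derangements in $S_5$.
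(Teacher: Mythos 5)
Your proof is correct, but it is a genuinely different argument from the one in the paper. The paper argues by contradiction at the level of matrix entries: assuming $tr(A^2)=5$ forces $A^2=I$ (since $A^2$ is doubly stochastic), and then a row/column bookkeeping argument on the nonzero entries of $A$ shows that the $i$-th column of $A$ would have to contain more than one entry equal to $1$, contradicting double stochasticity; the case $A\in\P_5^0$ is excluded separately because no trace-zero permutation of $S_5$ has order dividing $2$. Your route instead expands $tr(A^2)=\sum_{i,j}\lambda_i\lambda_j\,tr(P_{\pi_i}P_{\pi_j})$ over a Birkhoff decomposition into derangements and exploits the fact that the diagonal terms satisfy $tr(P_{\pi_i}^2)\le 2$ (the square of a $(5)$-cycle is fixed-point-free and the square of a $(3)+(2)$-cycle has exactly two fixed points), yielding the quantitative bound $tr(A^2)\le 5-3\sum_i\lambda_i^2<5$. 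This is clean and arguably sharper than the paper's qualitative contradiction; note also that you do not actually need Corollary \ref{cor:tr} at all, since $tr(P_{\pi_i}P_{\pi_j})\le 5$ holds trivially for any permutation matrices of order $5$ — the only substantive input is the diagonal bound, which makes your argument entirely elementary. Both approaches extend to $\Omega_n^0$ for odd $n$ (as the paper remarks after its proof); yours does so because any derangement of an odd-order set must contain an odd cycle of length at least $3$, whose square contributes no fixed points.
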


\begin{proof}  Let $tr(A^2)=5$ and hence $A^2=I.$ Then $A=(a_{ij})\notin \P_5^0.$ Let $A$ be a convex combination of at least two distinct permutation matrices, i.e. there is at least one row and one column of $A$ with more than one nonzero entry. Suppose $A^2=(a_{ij}^{(2)}).$ Then for $i,j=1,\ldots,5$ and $i \neq j,$ $a_{ij}^{(2)}=\sum_{k=1}^5 {a_{ik} a_{kj}}=0$ yield either $a_{ik}=0$ or $a_{kj}=0$ for $k=1,\ldots,5.$ Suppose $a_{ik_1}, \ldots, a_{ik_m}$ are the nonzero entries in the $i^{th}$ row of $A,$ where $2\leq k_m\leq 4$ and $k_j\neq i$ for $j=1,\ldots,m.$  Then $a_{k_1j}=\ldots=a_{k_mj}=0$ for all $j \neq i.$ Now, since $A$ is a DS matrix, we have for $k \in \{k_1,\ldots, k_m\},$ $\sum_{j=1}^5{a_{kj}}=a_{ki}=1,$  which implies that the $i^{th}$ column of $A$ contains $k$ nonzero elements equal to $1,$ which is a contradiction. This completes the proof. 
\end{proof}


\begin{remark}
The above theorem can easily be extended for matrices $A \in \Omega_n^0$ when $n$ is odd. Indeed, note that if $n$ is odd then $P^2 \neq I,$ for any $P\in \P_n^0.$ Hence $tr(A^2) \neq n$ can be proved using similar arguments used in the above proof.  
\end{remark}


\begin{proposition} \label{prop:A^2}
Let $A \in \Omega_5^0$ be such that $tr(A^2)=0.$ Then $A$ is a convex combination of at most three permutation matrices corresponding to $5$-cycles.
\end{proposition}

\begin{proof} Let $A=\sum_{\alpha}{c_{\alpha}P_{\alpha}},P_{\alpha}\in\P_5^0,$ $c_{\alpha}>0$ and $\sum_{\alpha}c_{\alpha}=1.$ So that $A^2=\sum_{\alpha}\sum_{\beta}c_{\beta}{c_{\alpha}P_{\beta}P_{\alpha}}.$ Now $tr(A^2)=0$ implies 
\begin{equation}\label{tr_0}
\sum_{\alpha=\beta}{c^2_{\alpha}P^2_{\alpha}}+\sum_{\alpha\neq\beta}{c_{\alpha}c_{\beta}P_{\alpha}P_{\beta}}=0.
\end{equation}
Since $c_{\alpha}c_{\beta}>0,$ $(\ref{tr_0})$ implies $tr(P_{\beta}P_{\alpha})=0$ and $tr(P_{\alpha^2})=0~\forall\,\, \alpha, \beta.$ 
If for some $\alpha,$ $P_{\alpha}$ corresponds to $(3,2)$ cycles then $tr(P_{\alpha}^2)=2$ and hence no matrix from $\{P_{\alpha}\}$ can correspond to cycle type $(3,2).$  Thus $A$ is a convex combination of permutation matrices that are $5$-cycles only. Clearly, if $A\in\P_5^0$ corresponds to a $5$-cycle permutation then $tr(A^2)=0.$

If possible let $A$ be the convex combination of more than two distinct permutation matrices corresponding to $5$-cycles, and suppose one of them is $P_{\pi}$ where ${\pi}=(a_1a_2a_3a_4a_5), a_i\in \{1,\hdots,5\}.$ Then, from Table \ref{tab:table1}, there are  ${\beta_1}:=\pi^2=(a_1a_3a_5a_2a_4),\beta_2:=\pi^3=(a_1a_4a_2a_5a_3)$ and ${\beta_5}:=(a_1a_2a_4a_5a_3)$ such that  $P_{\pi\beta_1}, P_{\pi\beta_2}$ and $P_{\pi\beta_5}$ have zero traces.
Now $\S_{\beta_5} = \{\beta_5,(a_1a_2a_5a_3a_4), (a_1a_3a_4a_2a_5),$ $ (a_1a_4a_2a_3a_5), (a_1a_4a_5a_2a_3)\},$ $\S_{\beta_1}=\{{\beta_1}\}, \S_{\beta_2}=\{{\beta_2}\}.$ Then $tr(P_{\beta_5}P_{\pi})=tr(P_{\beta}P_{\pi}),tr(P_{\beta_5}P_{\beta_1})=tr(P_{\beta}P_{\beta_1})$ and $tr(P_{\beta_5}P_{\beta_2})=tr(P_{\beta}P_{\beta_2}),\,\forall\, \beta\in \S_{\beta_5}$ so that
\begin{equation}\label{A:2}
A=\sum_{\alpha\in\{\pi\}\cup\S_{\beta_1}\cup\S_{\beta_2}\cup\S_{\beta_5}}{c_{\alpha}P_{\alpha}}.
\end{equation}
 
 However, since $tr(P_{\beta_i\beta_j})\neq 0,$ for $i\neq j, i,j=1,2,5,$ (\ref{A:2}) contradicts $tr(A^2)= 0.$ Thus 
$A$ takes the form $\sum_{\alpha\in\{\pi\}\cup\S_{\beta_i}}{c_{\alpha}P_{\alpha}}$ for $i=1,2,5.$  Now it is easy to verify that we can take maximum two elements from $\S_{\beta_5},$ such that the permutation matrix corresponding to their product has zero trace. Thus, $A$ is the convex combination of at most three permutation matrices.
Hence the proof follows.
\end{proof}


\begin{proposition} \label{prop:A^3}
Let $A \in \Omega_5^0$ be such that $tr(A^3)=0.$ Then $A$ is a convex combination of at most two permutation matrices corresponding to $5$-cycles.
\end{proposition}

\begin{proof} Let $A=\sum_{\alpha}{c_{\alpha}P_{\alpha}}$ for $P_{\alpha}\in\P_5^0,$ with $c_{\alpha}>0,\sum_{\alpha}c_{\alpha}=1.$ 
Now, $$A^3=\sum_{\alpha=\beta=\gamma}{c^3_{\alpha}P^3_{\alpha}}+\sum_{\alpha\neq\beta\neq\gamma}{c_{\alpha}c_{\beta}c_{\gamma}P_{\alpha}P_{\beta}P_{\gamma}}.$$ Thus $P^3_{\alpha},P_{\alpha}P_{\beta}P_{\gamma}\in \P_5^0$ whenever $tr(A^3)=0,$ and hence 
each one from the collection $\{P_{\alpha}\}$ corresponds to a $5$-cycle only. 


Let $A$ be the convex combination of more than one permutation matrix, and we write \begin{equation}A^3=\sum_{\gamma}{c_{\gamma}P_{\gamma}}\left(\sum_{\alpha=\beta}{c^2_{\alpha}P^2_{\alpha}}+\sum_{\alpha\neq\beta}{c_{\alpha}c_{\beta}P_{\alpha}P_{\beta}}\right)
\end{equation} such that for $tr(A^3)=0$ all the $\alpha^2\gamma$ and $\alpha\beta\gamma$ are $5$-cycles when $\alpha,\beta,\gamma$ are $5$-cycles.  
 Then, from Table \ref{tab:table1}, if $P_{\pi}\in \{P_{\alpha}\}$ such that
$\pi^2=(a_1a_2a_3a_4a_5),$ then there are ${\beta_1} :=\pi^4=(a_1a_3a_5a_2a_4), \beta_2 :=\pi=(a_1a_4a_2a_5a_3)$ and ${\beta_5} :=(a_1a_2a_4a_5a_3)$
such that $P_{\pi^2\beta_1},P_{\pi^2\beta_2}$ and $P_{\pi^2\beta_5}$ have zero traces. 
Then,  similar to the proof of Proposition \ref{prop:A^2}, consider $A=\sum_{\alpha\in \{\pi\}\cup\S_{\beta_1}\cup\S_{\beta_5}}{c_{\alpha}P_{\alpha}}$ since $\beta_2=\pi.$
Now clearly $P_{\beta^2_i\beta_j}$ for $i\neq j,i,j\in \{1,5\}$ are present in the expression of $A^3,$ where $\beta_1\neq \beta_5.$ But  
 $tr(P_{\beta^2_5\beta_1})=tr(P_{\beta_5\beta^2_1})=2$  
 whereas $tr(P_{\beta^2_5\pi})=tr(P_{\beta^2_1\pi})=0.$
 Since $tr(P_{\beta_5^2}P_{\beta_1})=tr(P_{\beta^2}P_{\beta_1})$ for all $\beta \in \S_{\beta_5},$ the permutation matrices correspond to the permutations from the sets $\S_{\beta_5}$ and $\S_{\beta_1}=\{\beta_1\}$ cannot be present simultaneously in $A.$ 
Now it is easy to verify that for any two elements $\pi_1,\pi_2 \in \S_{\beta_5},$ $tr(P_{\pi_1^2\pi_2})\neq 0.$

Thus in the expression of $A$ we can choose only one permutation matrix which belongs to $\S_{\beta_5}.$ Thus either $A=\sum_{\alpha\in \{\pi,\beta_1\}}{c_{\alpha}P_{\alpha}}$ or $A=\sum_{\alpha\in \{\pi,\beta\}}{c_{\alpha}P_{\alpha}}$ for $\beta \in \S_{\beta_5}.$
Hence the proof follows.
\end{proof}

\begin{proposition} \label{prop:A^4}
Let $A \in \Omega_5^0$ be such that $tr(A^4)=0.$ Then $A$ is a $5$-cycle permutation matrix.
\end{proposition}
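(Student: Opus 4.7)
The plan is to follow the template of Propositions \ref{prop:A^2} and \ref{prop:A^3}. Write $A = \sum_\alpha c_\alpha P_\alpha$ with $P_\alpha \in \P_5^0$, $c_\alpha > 0$, $\sum_\alpha c_\alpha = 1$. Expanding $A^4$ as a non-negative linear combination of permutation matrices and using that every permutation matrix has non-negative trace, the hypothesis $\mathrm{tr}(A^4) = 0$ forces $\mathrm{tr}(P_{\alpha_1} P_{\alpha_2} P_{\alpha_3} P_{\alpha_4}) = 0$ for every tuple with $c_{\alpha_i} > 0$. Taking $\alpha_1 = \alpha_2 = \alpha_3 = \alpha_4 = \alpha$ yields $\mathrm{tr}(P_\alpha^4) = 0$; since a permutation $\alpha = \alpha_1 \alpha_2$ of cycle-type $(3)+(2)$ satisfies $\alpha^4 = \alpha_1 \alpha_2^4 = \alpha_1$, of cycle-type $(3)+(1)+(1)$ with trace $2$, every $P_\alpha$ in the decomposition must correspond to a $5$-cycle.

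The core of the argument is to show that only one such $P_\alpha$ can appear. Assume, for contradiction, that $A$ involves two distinct $5$-cycle permutation matrices $P_\pi$ and $P_\tau$. Grouping the $16$ ordered words in $\{\pi,\tau\}^4$ by cyclic invariance of the trace, the vanishing of the coefficients of $c_\pi^3 c_\tau$, $c_\pi^2 c_\tau^2$, and $c_\pi c_\tau^3$ in $\mathrm{tr}(A^4)$ produces the four necessary conditions
\[
\mathrm{tr}(P_\pi^3 P_\tau) = \mathrm{tr}(P_\pi^2 P_\tau^2) = \mathrm{tr}(P_\pi P_\tau^3) = \mathrm{tr}\bigl((P_\pi P_\tau)^2\bigr) = 0.
\]
The last one asserts that $(\tau\pi)^2$ has no fixed point. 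Since $\tau\pi \in A_5$ (being a product of two even permutations), a short cycle-type check confirms that only a $5$-cycle in $A_5$ squares to a fixed-point-free permutation: the other even cycle-types $(3)+(1)+(1)$, $(2)+(2)+(1)$, and the identity all yield at least two fixed points when squared. Hence $\tau\pi$ is itself a $5$-cycle, and in particular $\mathrm{tr}(P_\pi P_\tau) = 0$.

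By the analysis in Proposition \ref{prop:A^2} (cf.\ Table \ref{tab:table1}), the condition $\mathrm{tr}(P_\pi P_\tau) = 0$ with $\tau \neq \pi$ forces $\tau$, up to conjugation by an element of $\langle\pi\rangle$, to lie in $\{\pi^2,\,\pi^3,\,(a_1a_2a_4a_5a_3)\}$, where $\pi = (a_1a_2a_3a_4a_5)$. Conjugation of $\tau$ by $\pi^k$ conjugates $\tau\pi^3$ and $\tau^3\pi$ by the same $\pi^k$ and hence preserves their cycle-types, so it suffices to check one representative per orbit. For $\tau = \pi^2$: $\tau\pi^3 = \pi^5 = \mathrm{id}$, giving $\mathrm{tr}(P_\pi^3 P_\tau) = 5$. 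For $\tau = \pi^3$: $\tau^3\pi = \pi^{10} = \mathrm{id}$, giving $\mathrm{tr}(P_\pi P_\tau^3) = 5$. For $\tau = (a_1a_2a_4a_5a_3)$: a direct computation yields $\tau\pi^3 = (a_1a_5)(a_2a_3)(a_4)$, of trace $1$. Each case contradicts one of the four necessary conditions, so no second permutation matrix can appear; thus $A = P_\pi$ for some $5$-cycle $\pi$.

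The main technical hurdle is isolating and simultaneously imposing the four independent trace conditions on a pair $(\pi,\tau)$ of $5$-cycles. Once the condition on $(\tau\pi)^2$ is used to restrict $\tau\pi$ to a $5$-cycle and Proposition \ref{prop:A^2} together with Table \ref{tab:table1} reduces the enumeration to three orbit representatives, the case analysis closes in a few short calculations.
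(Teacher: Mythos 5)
Your proof is correct and follows essentially the same route as the paper: expand $A^4$ to force every constituent permutation to be a $(5)$-cycle, then use Table \ref{tab:table1} to enumerate the possible second constituent up to conjugation by $\langle\pi\rangle$ and kill each case with a vanishing-trace condition on a degree-$4$ word in $P_\pi,P_\tau$. The only difference is in bookkeeping — you first deduce $\mathrm{tr}(P_\pi P_\tau)=0$ from $\mathrm{tr}\bigl((P_\pi P_\tau)^2\bigr)=0$ via the parity/fixed-point argument and then finish with $\mathrm{tr}(P_\pi^3P_\tau)$ and $\mathrm{tr}(P_\pi P_\tau^3)$, whereas the paper enumerates candidates directly from $\mathrm{tr}(P_{\pi^3\beta})=0$ and closes the cases with $\mathrm{tr}(P_{\pi^2\beta^2})\neq 0$ — but the architecture and conclusion are the same.
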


\begin{proof}Let $A=\sum_{\alpha}{c_{\alpha}P_{\alpha}}$ for $P_{\alpha}\in\P_5^0,$ with $c_{\alpha}>0,\sum_{\alpha}c_{\alpha}=1.$ 
Then clearly $P^4_{\alpha}$ is present in the expression of $A^4$ and hence for $tr(A^4)=0,$ each one from the collection $\{P_{\alpha}\}$ corresponds to a $5$-cycle only. 
Similar to the proof of the last two propositions, if $P_{\pi}\in \{P_{\alpha}\}$ such that
$\pi^3=(a_1a_2a_3a_4a_5),$ then there are ${\beta_1} :=\pi=(a_1a_3a_5a_2a_4), \beta_2 :=\pi^4=(a_1a_4a_2a_5a_3)$ and ${\beta_5} :=(a_1a_2a_4a_5a_3)$
such that $P_{\pi^3\beta_1},P_{\pi^3\beta_2}$ and $P_{\pi^3\beta_5}$ have zero traces. 
Then we write $A=\sum_{\alpha\in \{\pi\}\cup\S_{\beta_2}\cup\S_{\beta_5}}{c_{\alpha}P_{\alpha}}.$
Now $tr(P_{\beta^3_5\beta_2})=tr(P_{\beta_5\beta^3_2})=2$  
 whereas $tr(P_{\beta^3_2\pi})=tr(P_{\beta^3_5\pi})=0.$
 Thus, the permutation matrices corresponding to the permutations from the sets $\S_{\beta_5}$ and $\S_{\beta_2}$ cannot be present simultaneously in $A.$ 
Also, $P_{\gamma^2}P_{\beta^2}$ is present in $A^4$ for $P_{\gamma},P_{\beta}\in \{P_{\alpha}\},$ but neither $tr(P_{\pi^2\beta_2^2})$ nor $tr(P_{\pi^2\beta_5^2})$ are equal to zero.

Thus, we conclude that $A\in \P_5^0$, which corresponds to a $5$-cycle. Hence the proof follows.
\end{proof}

\begin{proposition} \label{prop:A^5}
Let $A \in \Omega_5^0$ be such that $tr(A^5)=0.$ Then $A\in \P_5^0$ and corresponds to a permutation whose cycle type is $(3,2).$  
\end{proposition}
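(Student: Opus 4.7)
The plan is to follow the scheme of Propositions \ref{prop:A^2}--\ref{prop:A^4}. Write $A=\sum_{\alpha}c_{\alpha}P_{\alpha}$ with each $P_{\alpha}\in\P_5^0$, $c_{\alpha}>0$ and $\sum_{\alpha}c_{\alpha}=1$, and expand
\[
A^5=\sum c_{\alpha_{i_1}}\cdots c_{\alpha_{i_5}}\,P_{\alpha_{i_1}}\cdots P_{\alpha_{i_5}}.
\]
Each summand is a permutation matrix of nonnegative trace and every coefficient is strictly positive, so $tr(A^5)=0$ forces $tr(P_{\alpha_{i_1}}\cdots P_{\alpha_{i_5}})=0$ for every ordered $5$-tuple drawn from the support of $A$.

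First I would eliminate $(5)$-cycles from the support. The diagonal monomial $P_{\alpha}^5=P_{\alpha^5}$ must have trace zero. If $\alpha$ is a $(5)$-cycle, then $\alpha^5=id$ and $tr(P_{\alpha}^5)=5$, which is impossible; if $\alpha$ has cycle type $(3)+(2)$, then, since it has order $6$, $\alpha^5=\alpha^{-1}$ is again of type $(3)+(2)$, consistent with trace zero. Hence every $P_{\alpha}$ in the support of $A$ corresponds to a permutation of cycle type $(3)+(2)$.

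Next I would argue that the support is in fact a singleton, so that $A\in\P_5^0$. Suppose for contradiction that $P_{\pi}$ and $P_{\beta}$ appear in $A$ with $\pi\neq\beta$, and set $\pi=(a_1a_2a_3)(a_4a_5)$. Using the invariance $tr(P_{\beta'}P_\pi^k)=tr(P_\beta P_\pi^k)$ for any $\beta'\in\S_\beta$ (by the cyclic trace argument used just before Corollary \ref{cor:tr}), Proposition \ref{prop} together with Table \ref{tab:table1} reduces the verification to the four isomorphism classes
\[
\beta\in\bigl\{\pi^5,\ (a_1a_2a_4)(a_3a_5),\ (a_1a_4a_2)(a_3a_5),\ (a_1a_4a_5)(a_2a_3)\bigr\}.
\]
For the three choices other than $(a_1a_2a_4)(a_3a_5)$, I would compute $tr(P_\pi^4P_\beta)=tr(P_{\pi^4\beta})$ using $\pi^4=(a_1a_2a_3)$; a direct cycle calculation gives at least one fixed point in each case. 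For the remaining case $\beta=(a_1a_2a_4)(a_3a_5)$, the product $\pi^4\beta$ happens to be of cycle type $(3)+(2)$ (trace $0$), so I would instead evaluate $tr(P_\pi^3P_\beta^2)=tr(P_{\beta^2\pi^3})$ using $\pi^3=(a_4a_5)$ and $\beta^2=(a_1a_4a_2)$, obtaining a $4$-cycle with one fixed point. In every case a monomial in the expansion of $A^5$ carries a positive coefficient and positive trace, contradicting $tr(A^5)=0$. Combined with the previous step, this yields $A\in\P_5^0$ of cycle type $(3)+(2)$.

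The main obstacle is bookkeeping rather than conceptual: one has to choose, for each of the four representative pairs $\{\pi,\beta\}$, a degree-$5$ monomial in $\{P_\pi,P_\beta\}$ whose underlying permutation is neither the identity nor of cycle type $(3)+(2)$. Once Table \ref{tab:table1} and the conjugation invariance of the trace are in hand, the verification reduces to four short cycle computations.
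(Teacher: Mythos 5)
Your proof is correct and follows essentially the same route as the paper: the diagonal monomial $P_\alpha^5$ forces every permutation in the support to have cycle type $(3)+(2)$, and then a positive-trace monomial in $\{P_\pi,P_\beta\}$ rules out each of the four non-diagonal graph classes from Table \ref{tab:table1}. In fact your case-by-case choice of monomials is more careful than the paper's: the paper checks only $tr(P_{\pi^3\beta^2})$ and asserts this forces $\beta=\pi$, but for $\beta=\pi^5=\pi^{-1}$ one has $\pi^3\beta^2=\pi^3\pi^{-2}=\pi$, whose trace is zero, so that single monomial does not exclude the class $\{\pi,\pi^5\}$; your use of $P_\pi^4P_\beta$ (giving $P_{\pi^3}$, trace $3$) closes exactly this case.
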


\begin{proof}
Since $tr(A^5)=0,$ $A$ can be the convex combination of permutation matrices all corresponding to cycle type $(3,2)$ only. 
Let $A=\sum_{\alpha}{c_{\alpha}P_{\alpha}}$ for $P_{\alpha}\in\P_5^0.$ Then for any two permutation matrices $P_{\pi},P_{\beta}\in \{P_{\alpha}\},$ the permutation matrix $P_{\pi^3\beta^2}$ should appear in the expression of $A^5$ and hence $tr(P_{\pi^3\beta^2})=0.$ 
Now since both $\pi$ and $\beta$ have cycle type $(3,2),$ 
 it is easy to verify that $\beta$ equals $\pi$ for $tr(P_{\pi^3\beta^2})=0.$ Thus, $A$ cannot be written as the nontrivial convex combination of permutation matrices. Which completes the proof.
\end{proof}

\section{Characteristic polynomial and eigenvalue region of trace-zero DS matrices of order $5$ 
}\label{sec:4}

In this section, 
we derive certain necessary conditions for the coefficients of a degree $5$ generic polynomial to be the characteristic polynomial of a trace zero DS matrix of order $5$. We verify these conditions with numerous numerical examples. Finally, we provide an approximation for the region $\omega^0_5.$   

In the following theorem, we provide inequality bounds for the coefficients of the characteristic polynomial of a trace zero DS matrix of order $5$ using some of the results mentioned in Section \ref{sec:preli}. 
Note that any trace zero DS matrix can be written as a convex combination of trace zero permutation matrices. Accordingly, pairs of trace zero permutation matrices can decide the probable boundaries of $\omega_5^0$. Consequently, we consider the convex combinations $A=cP_{\pi}+(1-c)P_{\beta}, c<0<1$ corresponding to permutation matrix pairs $\{P_{\pi}, P_{\beta}\}$ from the column on the right hand side of Table \ref{tab:table1}. Then we choose those matrices $A$ for which the coefficients of the characteristic polynomials have maximum or minimum values. Thus, adapting a similar {approach to that} of the trace zero DS matrices of order $4$ given in \cite{Benvenuti, Benvenuti2018}, we derive relations among the coefficients by eliminating the parameter $c$ and establish the following theorem. 

\begin{theorem} \label{theorem:ineq}
Let $P_A(x)=x^5+k_1x^4+k_2x^3+k_3x^2+k_4x+k_5$ be the characteristic polynomial of a matrix $A \in \Omega_5^0.$ Then



\begin{enumerate}
\item[(a)] $k_1=0$,
\item[(b)] $-\frac{5}{2} < k_2 \leq 0$ and $-\frac{5}{3} \leq k_3 \leq 0,$
\item[(c)] $k_3$ and $k_4$ satisfy at least one of the following conditions. \begin{enumerate}[(i)] 
\item $\begin{cases}{\max}\{k_2,-2-k_2\}\leq k_3\leq 0\,\,,\\
       \begin{cases}
       k_4 \leq 1+k_3 \,\,\,\mbox{if}\,\, k_4 \geq 0\\
      k_4 \geq {\max}\{-(1+k_2)(4+5k_2),-\frac{1}{9}(1+k_3)(2+5k_3)\}
       \,\,\,\mbox{if}\,\, k_4 \leq 0;
      \end{cases}
      \end{cases}$
   \item  $\begin{cases} \label{thm:ieq2}
   -\frac{5}{4}\leq k_2\leq 0\,\,,\\
     -\frac{20}{27}\leq k_3\leq 0\,\,,\\
     \begin{cases}
     k_4 \leq \frac{k_2^2}{5}\,\,\,\mbox{if}\,\, k_4 \geq 0\\
      \min\{k_3, k_4\} \geq -5 c_*(1-c_*)^2\,\,\,\mbox{if}\,\, k_4<0,\,k_3\neq0\,\,\mbox{and}\,\,\,0<\frac{k_4}{k_3}<\frac{1}{6}\\
       k_4 \geq -5 c_*(1-c_*)^2\,\,\,\mbox{if}\,\, k_4<0,\,k_3\neq0\,\,\mbox{and}\,\,\,\frac{1}{6}\leq \frac{k_4}{k_3}<1\\
       k_3 \geq -5 c_*^3(1-c_*)\,\,\,\mbox{if}\,\, k_4<0,\,k_3\neq0\,\,\mbox{and}\,\,\,1<\frac{k_4}{k_3}\leq \frac{9}{4}\\
       \min\{k_3, k_4\} \geq -5 c_*^3(1-c_*)\,\,\,\mbox{if}\,\, k_4<0,\,k_3\neq0\,\,\mbox{and}\,\,\,\frac{9}{4}<\frac{k_4}{k_3},\\ \hfill{\mbox{where}\,\,c_*=\frac{1}{2}\left(-\frac{k_4}{k_3}+\sqrt{\left(\frac{k_4}{k_3}\right)^2+4\frac{k_4}{k_3}}\right)}\\
         k_4 = 2 k_2\,\,\,\mbox{if}\,\, k_4 \leq 0\,\,\mbox{and}\,\,k_3=0;
      \end{cases}
      \end{cases}$
    \item  $\begin{cases}-(-k_3)^{\frac{1}{3}}(3-2(-k_3)^{\frac{1}{3}})\leq k_2\,\,,\\
       \begin{cases}
       k_4\geq-3 \sqrt{-k_2}(1-\sqrt{-k_2})^2
 \,\,\,\mbox{if}\,\, k_4 \leq 0\\
       k_4\leq-(-k_3)^{\frac{1}{3}}\left(1-(-k_3)^{\frac{1}{3}}\right)\left(1-3(-k_3)^{\frac{1}{3}}\right) \,\,\,\mbox{if}\,\, k_4 \geq 0;
      \end{cases}
      \end{cases}$
      \end{enumerate}
      \item[(d)] $1+k_1+k_2+k_3+k_4+k_5=0.$
\end{enumerate}
\end{theorem}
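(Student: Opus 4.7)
Parts (a) and (d) are immediate: $k_1=-\mathrm{tr}(A)=0$ by hypothesis, and since every doubly stochastic matrix has $1$ as an eigenvalue, $P_A(1)=0$ yields (d). For (b), Newton's identities (with $k_1=0$) give
\[ k_2=-\tfrac{1}{2}\mathrm{tr}(A^2),\qquad k_3=-\tfrac{1}{3}\mathrm{tr}(A^3),\qquad k_4=\tfrac{1}{2}k_2^2-\tfrac{1}{4}\mathrm{tr}(A^4). \]
Nonnegativity of the entries of $A^k$ forces $\mathrm{tr}(A^k)\geq 0$, so $k_2,k_3\leq 0$; and since each diagonal entry of a power of a doubly stochastic matrix is bounded by $1$, $\mathrm{tr}(A^k)\leq 5$ yields $k_2\geq -\tfrac{5}{2}$ and $k_3\geq -\tfrac{5}{3}$. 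Strict inequality on the left for $k_2$ follows from Proposition \ref{pro_tr5}, which rules out $A^2=I$.

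For (c), I would adapt the order-$4$ method of \cite{Benvenuti,Benvenuti2018}. Decompose $A=\sum_\alpha c_\alpha P_\alpha$ as a convex combination of trace-zero permutation matrices. The feasible set for $(k_2,k_3,k_4)$ is then the polynomial image of a simplex, and its boundary is traced out by one-parameter families $A(c)=cP_\pi+(1-c)P_\beta$ arising from pairs $\{P_\pi,P_\beta\}\subseteq\P_5^0$. By Corollary \ref{lemma:equi} and Table \ref{tab:table1}, such pairs fall into $8+4+5=17$ equivalence classes under simultaneous conjugation, and by Proposition \ref{prop} simultaneous conjugation preserves the characteristic polynomial of any linear combination; so it suffices to treat the $17$ representative pairs. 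For each, one computes the characteristic polynomial of $A(c)$, obtains $k_2(c),k_3(c),k_4(c)$ as low-degree polynomials in $c\in[0,1]$, and eliminates $c$ between them to produce algebraic relations bounding the coefficients.

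The hard part will be the case analysis. The seventeen representative pairs yield a collection of boundary curves, and cases (i), (ii), (iii) correspond to different groupings of pairs by cycle-type combination of $\pi$ and $\beta$. Within (ii), the five subcases partitioned by the sign and ratio $k_4/k_3$ reflect which of two competing curves is active in each regime: the critical quantity $c_*=\tfrac{1}{2}\bigl(-k_4/k_3+\sqrt{(k_4/k_3)^2+4(k_4/k_3)}\bigr)$ appears precisely as the root of the quadratic obtained when eliminating $c$ along the boundary defined by a $(5)$-cycle paired with a $(3)+(2)$-cycle. Each elimination is a routine symbolic calculation, but verifying that the piecewise bounds correctly describe the envelope of all seventeen curves over the entire $(k_3,k_4)$-region—and reconciling the redundant representatives that trace the same curve—is where the substantive bookkeeping lies.
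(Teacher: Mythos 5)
Your plan follows the paper's proof essentially verbatim: (a), (b), (d) come from Newton's identities, Proposition \ref{pro_tr5} and $P_A(1)=0$, and (c) is obtained by running through the representative pairs of Table \ref{tab:table1} for two-term convex combinations $cP_\pi+(1-c)P_\beta$ and eliminating $c$. One small correction: $c_*$ arises from eliminating $c$ between $k_3=-5c(1-c)^2$ and $k_4=-5c^3(1-c)$ for the pair of two $(5)$-cycles $\{\pi,\pi^2\}$ (the paper's case (ii)(A)); the $(5)$-cycle paired with a $(3)+(2)$-cycle is instead what produces the cube-root and square-root bounds of case (iii). Also, only $7+4+4=15$ of your $17$ classes give genuine pairs, since the diagonal classes $\{\pi,\pi\}$ reduce to single permutation matrices.
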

\pf
Newton's identity \cite{kalman2000matrix} for the coefficients of characteristic polynomial in terms of the trace of a matrix is given by
$$
k_i=-\frac{1}{i}[k_{i-1} tr(A)+k_{i-2} tr(A^2)+\ldots+k_0 tr(A^i)], k_0=1.
$$
Then for our case, $k_1=0,k_2=-\frac{1}{2}tr(A^2)\in (-\frac{5}{2},0]$ by Proposition \ref{pro_tr5}, and $k_3=-\frac{1}{3}tr(A^3)\in [-\frac{5}{3},0].$ Thus the proofs of $(a)$ and $(b)$ follow. Besides, we get
\begin{align} \label{coeffi:k4}
\begin{split}
k_4=&-\frac{1}{4}[k_3 tr(A)+k_2 tr(A^2)+k_1 tr(A^3)+k_0 tr(A^4)] \\
    =&-\frac{1}{4}[-2 k_2^2 + tr(A^4)].
    \end{split}
    \end{align}
    Since $1$ is an eigenvalue of the doubly stochastic matrix $A,$ $P_A(1)=0$ and hence leads the proof of $(d)$.

Clearly from (\ref{coeffi:k4}), unlike $k_2$ and $k_3,$ $k_4$ is not specified to be non negative or non positive. The upper and lower bound for the coefficient $k_4$ can be obtained while we consider $A=(1-c)Q+cR,\,Q,R\in \P_5^0,Q\neq R$ and $0<c<1$ \cite{Benvenuti2018}. Then we determine the coefficients $k_i,i=2,3,4$ below.

\begin{enumerate}[(i)]
\item
Suppose $R= P_{\pi}$ and $Q= P_{\beta},$ where both $\pi$ and $\beta$  are $(3,2)$ cycles and $\pi\neq \beta$. Hence $tr(R^4)=tr(Q^4)=2,tr(R^3)=tr(Q^3)=3.$ Then from Table \ref{tab:table1} the following possible cases arise.
\begin{itemize}
\item[(A)] \underline{$\pi=(a_1a_2a_3)(a_4a_5),\beta=(a_1a_3a_2)(a_4a_5).$}\\
As $R=Q^{T},$ $tr(QR)=tr(Q^2R^2)=tr(QRQR)=5,tr(Q^2R)=tr(QR^2)=0,tr(Q^3R)=tr(QR^3)=2.$ So that $k_2=-\left(1+3 c(1-c)\right),k_3=-\left(1-3c(1-c)\right)$ and $ k_4=3c(1-c) > 0 ~\mathrm{since}~0< c< 1$.

\item[(B)] \underline{$\pi=(a_1a_2a_3)(a_4a_5),$ $\beta=(a_1a_2a_4)(a_3a_5).$}\\
For this case $tr(QR)=tr(Q^3R)=tr(QR^3)=tr(QRQR)=0,tr(Q^2R)=tr(QR^2)=tr(Q^2R^2)=1.$ Hence $k_2=k_3=-(c^2+(1-c)^2)$, and $k_4=0.$

\item[(C)] \underline{$\pi=(a_1a_2a_3)(a_4a_5),$ $\beta=(a_1a_4a_2)(a_3a_5).$}\\
Here $tr(QR)=1,tr(Q^3R)=tr(QR^3)=tr(Q^2R)=tr(QR^2)=0,tr(Q^2R^2)=2,tr(QRQR)=5.$ So that, $k_2=c(1-c)-1,k_3=3 c(1-c)-1$ and $k_4=c(1-c)(1-5c(1-c)).$

\item[(D)] \underline{$\pi=(a_1a_2a_3)(a_4a_5),$ $\beta=(a_1a_4a_5)(a_2a_3).$}\\
$tr(QR)=tr(QRQR)=2,tr(Q^3R)=tr(QR^3)=tr(Q^2R)=tr(QR^2)=1,tr(Q^2R^2)=0.$ Hence $k_2=-1,k_3=-(c^2+(1-c)^2),k_4=c(1-c).$
\end{itemize}
We get the maximum value of $k_4$ and the minimum value of $k_2$ in case $(A).$ So the maximum of $k_4$ is $k_4^{max}=3c(1-c)=1+k_3,$ and the minimum of $k_2$ is $k_2^{min}=-(1+3c(1-c))=-2-k_3.$ 
Similarly, for case $(C)$ we have the minimum of $k_4,$ $k_4^{min}= c(1-c)(1-5c(1-c))=-(1+k_2)(4+5k_2)=-\frac{1}{9}(1+k_3)(2+5k_3).$ Again, considering $(B)$ and $(D)$ for the minimum of $k_3$ we get $k_3^{min}=k_2$ or $k_2\leq k_3^{min}.$ Thus all these yield the inequalities in $(c)(i).$

\item Let $R=P_{\pi},Q= P_{\beta}$ and $\pi,\beta$ be $5$-cycles and $\pi \neq \beta$. Hence $tr(R^4)=tr(Q^4)=tr(R^3)=tr(Q^3)=0.$ Now from Table \ref{tab:table1} we have the following possible cases.
\begin{itemize}
\item[(A)] \underline{$\pi=(a_1a_2a_3a_4a_5),\beta=(a_1a_3a_5a_2a_4).$}\\
In this case $tr(QR)=tr(QR^2)=tr(Q^3R)=tr(Q^2R^2)=tr(QRQR)=0,tr(Q^2R)=tr(QR^3)=5.$ Hence $k_2=0,k_3=-5c(1-c)^2$ and $k_4=-5c^3(1-c).$
\item[(B)] \underline{$\pi=(a_1a_2a_3a_4a_5),\beta=(a_1a_4a_2a_5a_3).$}\\
Here $tr(QR)=tr(QR^3)=tr(Q^2R^2)=tr(Q^2R)=tr(QRQR)=0,tr(QR^2)=tr(Q^3R)=5.$ So that $k_2=0,k_3=-5c^2(1-c)$ and $k_4=-5c(1-c)^3.$
\item[(C)] \underline{$\pi=(a_1a_2a_3a_4a_5),\beta=(a_1a_5a_4a_3a_2).$}\\
Here $R=Q^T,$ so $tr(QR)=tr(Q^2R^2)=tr(QRQR)=5,tr(QR^2)=tr(Q^2R)=tr(QR^3)=tr(Q^3R)=0,tr(Q^4)=tr(R^4)=tr(Q^3)=tr(R^3)=0.$ Hence $k_2=-5c(1-c),k_3=0$ and $k_4=5c^2(1-c)^2.$
\item[(D)] \underline{$\pi=(a_1a_2a_3a_4a_5),\beta=(a_1a_2a_3a_5a_4).$}\\
In this case $tr(QR)=1,tr(Q^2R)=tr(QR^2)=0,tr(Q^3R)=tr(Q
R^3)=tr(Q^2R^2)=2,tr(QRQR)=5.$ Hence $k_2=-c(1-c),k_3=0$ and $k_4=-2 c(1-c).$

\item[(E)] \underline{$\pi=(a_1a_2a_3a_4a_5),\beta=(a_1a_2a_4a_5a_3).$}\\
Here $tr(QR)=tr(QRQR)=0,tr(Q^2R)=tr(QR^2)=tr(Q^2R^2)=2,tr(Q^3R)=tr(QR^3)=1.$ So that, $k_2=0,k_3=-2 c(1-c)$, and $k_4=-c(1-c).$

\item[(F)] \underline{$\pi=(a_1a_2a_3a_4a_5),\beta=(a_1a_2a_5a_4a_3).$}\\
In this case $tr(QR)=tr(Q^2R)=tr(QR^2)=tr(QRQR)=2,tr(Q^2R^2)=1,tr(Q^3R)=tr(QR^3)=0.$ So that $k_2=k_3=-2 c(1-c),$ and $k_4=0.$

\item[(G)] \underline{$\pi=(a_1a_2a_3a_4a_5),\beta=(a_1a_3a_5a_4a_2).$}\\
Here $tr(QR)=tr(Q^3R)=tr(QR^3)=tr(QRQR)=2,tr(Q^2R)=tr(QR^2)=1,tr(Q^2R^2)=0.$ So that $k_2=-2c(1-c),k_3=-c(1-c)$ and $k_4=-c(1-c)(5c^2-5c+2).$
\end{itemize}
We have $k_4^{max}=5c^2(1-c)^2=\frac{k_2^2}{5}$ in case $(C).$  
For $k_4<0$ and $k_3=0,$ we get from $(D)$ that $k_4=-{2}c(1-c)$ and the corresponding $k_2=-c(1-c),$ then by eliminating $c,$ and as well as from Proposition \ref{prop:A^3}, we get the desired result. Now $k_4^{min}$ and $k_3^{min}$ are obtained from $(A)$ and $(B),$ which have the same expressions while replacing $c$ with $(1-c).$ These give the same results after eliminating $c.$ Hence we go for one of them by taking $k_4^{min}= -5c^3(1-c)$ and $k_3^{min} =-5c(1-c)^2\neq 0$ where $0<c<1.$ Thus we have the desired inequalities given in $c(ii)$ for $k_4<0$ and $k_3\neq 0$ (see Appendix for details). Again, from $(A)$ we get $k_3^{min}=-5 c(1-c)^2$ with the corresponding $k_2=0$ and from $(C)$ we have $k_2^{min}=-5c(1-c)$ with the corresponding $k_3=0;$ hence $k_3^{min}\geq-\frac{20}{27}$ and $k_2^{min}\geq -\frac{5}{4}.$
\item
Consider the case when $R= P_{\pi}$ and $Q= P_{\beta}$ with the $5$-cycle $\pi$ and the $(3,2)$ cycle $\beta.$ Hence $tr(R^4)=tr(R^3)=0,tr(Q^3)=3,tr(Q^4)=2.$ Then by Table \ref{tab:table1} we consider the following possible cases.
\begin{enumerate}[(A)]
\item \underline{$\pi=(a_1a_2a_3a_4a_5),\beta=(a_1a_2a_3)(a_4a_5).$}\\
 $tr(QR)=tr(QR^2)=tr(Q^3R)=tr(QRQR)=1,tr(Q^2R^2)=tr(QR^3)=0,,tr(Q^2R)=2.$ So that $k_2=k_3=-(1-c)$ and $ k_4=0.$

\item \underline{$\pi=(a_1a_2a_3a_4a_5),$ $\beta=(a_1a_2a_4)(a_3a_5).$}\\
In can be checked that $tr(QR)=tr(Q^3R)=0,tr(QR^3)=3,tr(QR^2)=tr(Q^2R)=1,tr(R^2Q^2)=tr(QRQR)=2.$ Hence $k_2=-(1-c)^2,k_3=-(1-c)\left(1-c(1-c)\right)$, and $k_4=-3c^2(1-c).$

\item \underline{$\pi=(a_1a_2a_3a_4a_5),$ $\beta=(a_1a_3a_2)(a_4a_5).$}\\
Here $tr(QR)=3,tr(R^3Q)=tr(RQ^3)=tr(R^2Q^2)=1,tr(R^2Q)=tr(RQ^2)=0,tr(QRQR)=5.$ So that, $k_2=-(1-c)(1+2c),k_3=-(1-c)^3$ and $k_4=-c(1- c)(3c-2).$

\item \underline{$\pi=(a_1a_2a_3a_4a_5),$ $\beta=(a_1a_4a_2)(a_3a_5).$}\\
$tr(RQ)=tr(R^3Q)=tr(QRQR)=1,tr(RQ^3)=tr(R^2Q^2)=tr(RQ^2)=0,tr(R^2Q)=3.$ Hence $k_2=-(1-c),k_3=-(1-c)(3c^2+(1-c)^2),k_4=-c(1-c)(2c-1).$
\end{enumerate}
We obtain $k_4^{max}$ and $k_4^{min}$ in the cases $(C)$ and $(B),$ respectively. Consider case $(C),$ then $k_4^{max}=-c(1-c)(3c-2)$ and $k_2^{min}=-(1-c)(1+2c),$ with $k_3=-(1-c)^3.$ By substituting $c=1-(-k_3)^{\frac{1}{3}}$ in $k_4^{max},$ 
we get the desired relation. Similarly, we have $k_4^{min}=-3c^2(1-c)=-3(1-\sqrt{-k_2})^2\sqrt{-k_2}$ 
in case $(B),$  with $k_2=-(1-c)^2.$
Thus, all these results establish the inequalities present in $c(iii).$ \end{enumerate} 
These complete the proof. $\hfill{\square}$ 

Now we consider some numerical examples.

\begin{example}
\begin{enumerate}
    \item Consider $$\bmatrix{0&0.5&0&0.3&0.2\\0.1&0&0.7&0.2&0\\0.7&0&0&0&0.3\\0&0.3&0.2&0&0.5\\0.2&0.2&0.1&0.5&0}\in \Omega_5^0,$$ with the characteristic polynomial $x^5-0.43 x^3-0.436 x^2-0.1585 x+0.0245.$ So that, $-\frac{20}{27}<k_3, -\frac{5}{4}<k_2, k_3\leq k_2, k_4<0,$ and $1>\frac{k_4}{k_3}>\frac{1}{6}.$ Now, $k_4>-5c_*(1-c_*)^2,$ where $c_*\approx0.448.$ Thus, the relations given in $c(ii)$ of Theorem \ref{theorem:ineq} are satisfied.
    \item Take $$\bmatrix{0&0.1&0.2&0.3&0.4\\0.4&0&0.1&0.2&0.3\\0.4&0&0&0.4&0.2\\0&0.2&0.7&0&0.1\\0.2&0.7&0&0.1&0}\in\Omega_5^0.$$ Then for the characteristic polynomial $x^5-0.74 x^3-0.3 x^2+0.02x+0.02,$ we verify  $-\frac{20}{27}<k_3, -\frac{5}{4}<k_2, k_2<k_3$ and $k_4 >0.$ Further, $k_4=0.02 <0.10952=\frac{k_2^2}{5}$ holds as given in $c(ii)$ of Theorem \ref{theorem:ineq}.
\end{enumerate}
\end{example}

We verify the inequalities given in Theorem \ref{theorem:ineq} for a large number of examples of trace zero DS 
 matrix with the help of MATLAB. There are in total $45$ permutation matrices of order $5$ having zero trace, say, $P_{\pi_1},\ldots,P_{\pi_{45}}$ for $\pi_1,\ldots,\pi_{45}\in S_5^0.$ Since any matrix in $\Omega_5^0$ can be written as the convex combination of $P_{\pi_1},\ldots,P_{\pi_{45}}$ for $\pi_1,\ldots,\pi_{45}\in S_5^0,$  we construct the desired matrices of the form $\sum_{i=1}^{45}{c_{i}P_{\pi_i}}$ with $c_{i}\geq 0,\sum_{i=1}^{45}c_{i}=1.$ 
 Then in different runs, by using nested loops, we update the values of $44$ nonzero coefficients $ c_1,\ldots,c_{44},$ taking values from $[0,1]$, subject to the condition $\sum_{i=1}^{44}c_{i}\leq 1$ so that $c_{45}=1-\sum_{i=1}^{44}c_{i}$. Now, running all $44$ loops consecutively needs a very high waiting time with a large computational cost, even for an increment of size $ 10^{-1}$ in coefficient values. Hence, to generate a sufficiently large number of sample matrices for a single run, we restrict ourselves in $20$ nested loops, with non-equispaced (unbiased) increments in values approximately of size $\leq 10^{-2},$ each updating values for at least two coefficients together.
Then, we verify the inequality relations as given in Theorem \ref{theorem:ineq} among the coefficients $k_2,k_3$ and $k_4$ of the characteristic polynomials of these matrices.

We emphasize that a computational study for those pairs of permutation matrices is carried out in  \cite{harlev}  to determine the boundaries of $\omega_n$ for $n=2, \ldots, 11$.  In Figure \ref{fig:3}, we show the approximate region for $\omega_5^0$ by computing eigenvalues of several trace zero DS matrices using MATLAB. In support of the conjecture given in \cite{harlev}, we also observe that the outermost eigenvalue paths, which form the boundaries of $\omega_5^0,$ arise from the convex combinations of pairs of trace zero permutation matrices.

Clearly, the quadrangular-shaped region, connecting four extreme points $\mathbf{a, b, c, d}$ on the circle, 
i.e., $\Pi_5^0$, is in $\omega_5^0.$ The line joining $\mathbf{e,f},$ i.e., $\Pi_3^0$ is a part of $\omega_5^0.$ In this context, note that $[-1,1]\cup\Pi_3^0\cup \Pi_5^0 \subsetneq \omega_5^0$ \cite{Benvenuti2015}. The non-real eigenvalues of $A(t)=tP_{(12345)}+(1-t)P_{(124)(35)},$ for $0\leq t \leq 0.2820\, \mbox{(approximately)},$ form the curves $\mathbf{eg}$ and $\mathbf{fh}.$ Besides, for some $t$ where $t \in [0, 1],$ non-real eigenvalues of $tP_{(12345)} + (1-t)P_{(14253)}, tP_{(12345)} + (1-t)P_{(15432)},$ and $tP_{(123)(45)} + (1-t)P_{(132)(45)},$ and real eigenvalues of $tP_{(12345)} + (1-t)P_{(14)(235)},$  form the part of the boundary arcs joining $\mathbf{a,b}$ and $\mathbf{c,d};$ $\mathbf{a,d}$ and $\mathbf{b,c}; \mathbf{e,f};$ and $\mathbf{i,j},$ respectively.
 Thus, the region, bounded by all the solid blue curves and the horizontal line segment $\mathbf{ij}\equiv[-1,1],$ is contained in $\omega_5^0.$ 
 This kind of realization of the boundary arcs, by single parameterized matrices, of the region of eigenvalues of all stochastic matrices of any order $n$, can be seen in  \cite{johnson2017matricial}.
  It is to be noted that the permutation matrices $P_{(12345)}$ and $P_{(124)(35)}$ are mentioned in case $(iii)(B)$ corresponding to $k_4^{min},$ in the proof of Theorem \ref{theorem:ineq}.

\begin{figure}[H]
     \centering
     \includegraphics[height=7 cm, width=7.5 cm]{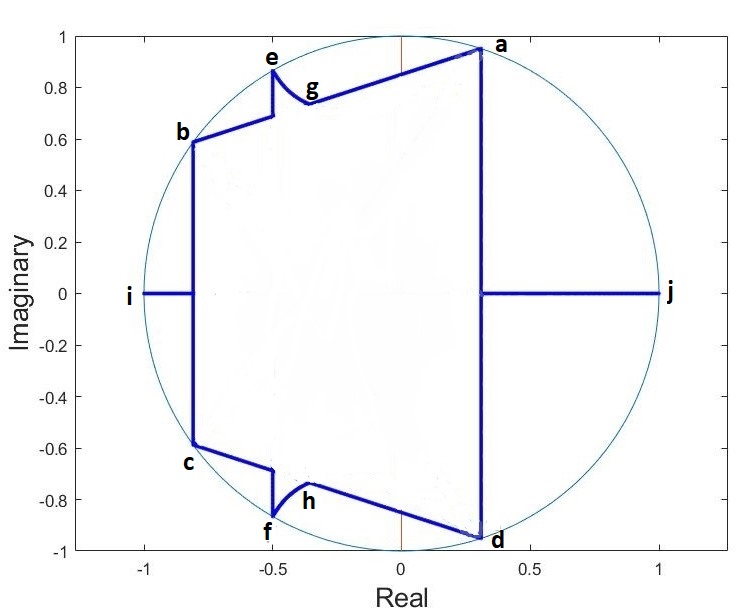}
     \caption{The approximate region for $\omega_5^0.$ The region bounded by the solid blue curve together with the horizontal line estimates $\omega_5^0.$ 
     }\label{fig:pds} \label{fig:3}
 \end{figure}

\noindent{\textbf{Conclusion.}} We derive the trace of the product of two permutation matrices of order $5$ through non-isomorphic weighted digraphs corresponding to the permutation matrices. Employing this result, we determine the explicit forms in terms of the permutation matrices whose convex combination defines a trace zero DS matrix $A$ of order $5$ such that $A^k$ has zero trace, when $k\in\{2,3,4,5\}$. Then, we derive certain necessary conditions for the coefficients of a generic monic polynomial of degree $5$ such that it can be realized as the characteristic polynomial of some trace zero DS matrix of order $5$. Finally, using these results, we approximate the eigenvalue region of trace-zero DS matrices of order $5.$\\


\noindent{\textbf{Acknowledgement.}} The authors thank M. Rajesh Kannan for his insightful initial discussions on this paper. AM thanks IIT Kharagpur for financial support when this work was done.




 \noindent{\textbf{Appendix}}\\
For a comprehensive understanding, we provide a detailed calculation for the proof of the inequality relations outlined in $(c)(ii)$ of Theorem \ref{theorem:ineq}, as follows.\\
Given that $k_3^{min}=-5c(1-c)^2$ and $k_4^{min}=-5c^3(1-c),$ where $0<c<1,$
let $\phi(k_4)=k_4-f\left(c\right),$ where $f(c)=-5c^3(1-c)$ and $c=\frac{1}{2}(-k+\sqrt{k^2+4k})$ is a function of $k_4$ only with ${k}=\frac{k_4}{k_3},$ $k_3\neq 0.$ Then $\phi'(k_4)=1-f'\left(c\right)c'(k_4)=
1+5c^2(3-4c)c'(k_4),$ where $c'(k_4)=\frac{1}{2k_3}\left(\frac{k+2}{\sqrt{k^2+4k}}-1\right)\leq 0.$ Hence it is obvious that $\phi'(k_4)\geq 0$ for $-3+4c \geq 0$ or equivalently $\phi'(k_4)\geq 0$ for $k\geq \frac{9}{4}.$ Thus we obtain $\phi(k_4) \geq \phi(k_4^{min})=0$ for $k\geq \frac{9}{4}.$ 
Similarly, let $\psi(k_3)=k_3-g\left(c\right),$ where $g(c)=-5c(1-c)^2$ and $c=\frac{1}{2}(-k+\sqrt{k^2+4k})$ is a function of $k_3$ only with ${k}=\frac{k_4}{k_3},$ $k_3\neq 0.$ Then $\psi'(k_3)=1-g'\left(c\right)c'(k_3)=1+5(1-c)(1-3c)c'(k_3),$ where $c'(k_3)=-\frac{k_4}{2k_3^2}\left(\frac{k+2}{\sqrt{k^2+4k}}-1\right)\geq 0.$ Hence it is obvious that $\psi'(k_3)\geq 0$ for $1-3c \geq 0$ or equivalently $\psi'(k_3)\geq 0$ for $k\leq \frac{1}{6}.$ Thus we get $\psi(k_3) \geq \psi(k_3^{min})=0,$ for $k\leq \frac{1}{6}.$ 

Now, $k_3-f(c) \geq k_3^{min}-f(c)=5c(1-c)(c^2+c-1),$ where $c=\frac{1}{2}(-k+\sqrt{k^2+4k})$ and ${k}=\frac{k_4}{k_3},$ $k_3\neq 0.$  Then $k_3-f(c)\geq 0$ for $\frac{\sqrt 5-1}{2}<c<1$ or equivalently we can say $k_3-f(c)\geq 0$ for $1<k<\infty.$ Similarly $k_4-g(c) \geq k_4^{min}-g(c)=-5c(1-c)(c^2+c-1),$ where $c=\frac{1}{2}(-k+\sqrt{k^2+4k})$ and ${k}=\frac{k_4}{k_3},$ $k_3\neq 0.$  Then $k_4-g(c)\geq 0$ for $0<c< \frac{\sqrt 5-1}{2}$ or equivalently we can say $k_4-g(c)\geq 0$ for $k\leq 1.$
Thus, combining all the above results, we get the inequality relations as provided in statement $(c)(ii)$ of Theorem \ref{theorem:ineq}.

\end{document}